\newcommand{\abs}[1]{{\left|#1\right|}}
\def\XXint#1#2#3{{\setbox0=\hbox{$#1{#2#3}{\int}$}
    \vcenter{\hbox{$#2#3$}}\kern-.5\wd0}}
\theoremstyle{definition}
\newtheorem{definizione}{Definition}[section]
\theoremstyle{plain}
\newtheorem{teorema}{Theorem}[section]
\newtheorem{lemma}[teorema]{Lemma}
\theoremstyle{definition}
\newtheorem{esempio}{Example}[section]
\newtheorem{oss}[esempio]{Remark}
\newtheorem*{open*}{Open problems}
\DeclareMathOperator{\R}{\mathbb{R}}
\newcommand{\myfootnote}[2]{\begingroup
	\def\@makefnmark{}%
	\addtocounter{footnote}{-1}%
	\footnote{\textbf{#1} #2}
	\endgroup}
\title{Sharp lower bound for the Monge-Ampère torsion on convex sets}
\author{Francesco Salerno}
\date{}
\newcommand{\Addresses}{{
\bigskip 
   
   \medskip 
     \textit{E-mail address}, F.~Salerno: \texttt{f.salerno@ssmeridionale.it} 
   \medskip

 \textsc{Mathematical and Physical Sciences for Advanced Materials and Technologies, Scuola Superiore Meridionale, Largo San Marcellino 10, 80138 Napoli, Italy.}

 \par\nopagebreak 

}} 
\begin{document}
\maketitle
\begin{abstract}
    The \emph{Monge-Ampère} torsion deficit of an open, bounded convex set $\Omega\subset\R^n$ of class $C^2$ is the normalized gap between the value of the torsion functional evaluated on $\Omega$ and its value on the ball with the same $(n-1)$-quermassintegral as $\Omega$. Using the technique of the \emph{shape derivative}, we prove that the ratio between this deficit and to a geometric deficit arising from the \emph{Alexandrov-Fenchel inequality}, for any given family of open, bounded convex sets of $\R^n$ ($n\geq2$) of class $C^2$, smoothly converging to a ball, is bounded from below by a dimensional constant. We also show that this ratio is always bounded from above by a constant.
    \newline
    \newline
    \textsc{Keywords:}
    Monge-Ampère operator, torsional rigidity, shape derivative.
    \newline
    \textsc{MSC 2020:} 
    35B35, 35J96, 52A40.
\end{abstract}

\section{Introduction}
Given an open convex set $\Omega\subset\mathbb{R}^n$ of class $C^2$, starting from the solution of the following problem
\begin{equation}\label{original prob}
    \begin{cases}
        \det D^2u_\Omega=1 & \text{ in } \Omega\\
        u_\Omega=0 & \text{ on } \partial\Omega
    \end{cases},
\end{equation}
where $D^2u$ is the Hessian matrix of $u$, in \cite{Brandolini2009NewIE} (subsequently proven in \cite{BNT2} by the same authors using different techniques) it is proved that in the class of open, bounded convex sets of class $C^2$ with prescribed volume, the ellipsoid minimizes the torsion functional, namely
\begin{equation}
    \label{min tor}T(\Omega)=\|u_\Omega\|_{L^1(\Omega)}^n\geq \|u_{\mathcal{E}(\Omega)}\|_{L^1(\mathcal{E}(\Omega))}^n=T(\mathcal{E}(\Omega))=\frac{|\Omega|^{n+2}}{(n+2)^n\omega_n^2}.
\end{equation}
Here $|\cdot|$ denotes the Lebesgue measure of a set and $\omega_n$ is the measure of the unit ball (see \cite{L} for the extension of \eqref{min tor} to open, bounded convex sets not necessarily $C^2$).
In \cite{Talenti1981}, in dimension $n=2$, then extended in \cite{tso} to $n\geq2$, prescribing another geometric quantity, the $(n-1)$-th quermassintegral of $\Omega$, denoted by $W_{n-1}(\Omega)$ (see \S \ref{Sezione 2} for the precise definition), it was proved that the torsion functional is maximized by a ball, namely if $\Omega^*_{n-1}$ is the ball with the same $(n-1)$-th quermassintegral as $\Omega$, then
\begin{equation}\label{max tor}
    T(\Omega)\leq T(\Omega^*_{n-1})=\frac{\omega_n^n\zeta_{n-1}^{n(n+2)}(\Omega)}{(n+2)^n},
\end{equation}
where $\zeta_{n-1}(\Omega)$ is the radius of the ball $\Omega^*_{n-1}$.
From \eqref{max tor} it follows that the quantity
\begin{equation*}
    \delta T(\Omega)=\frac{T(\Omega^*_{n-1})-T(\Omega)}{T(\Omega^*_{n-1})}=1-\frac{T(\Omega)}{T(\Omega^*_{n-1})}
\end{equation*}
is always nonnegative. On the other hand, by the Alexandrov-Fenchel inequality (see \S \ref{Sezione 2} for a precise statement) the quantity
\begin{equation*}
    \delta_{AF}(\Omega)=1-\omega_n^{(n-1)(n+2)}\frac{W_0^{n+2}(\Omega)}{W_{n-1}^{n(n+2)}(\Omega)}
\end{equation*}
is also always nonnegative, and it is zero if and only if $\Omega$ is a ball (see, for instance, \cite{Schneider}). In this work, we want to study the ratio $\delta T/\delta_{AF}$. In more details we prove that this ratio is bounded from above by a constant, and this follows directly from \eqref{min tor}-\eqref{max tor}.
In order to estimate the ratio from below, we exploit the technique of shape derivative to investigate the behavior of $\delta T/\delta_{AF}$ along an arbitrary family of open, bounded convex sets that converges in a suitable way to a ball (see Definition \ref{convergence} below). With this technique in \cite{Nitsch2012AnIR} the ratio $\delta P/\delta\lambda$, that is, the deficit of the isoperimetric inequality divided by the deficit of the Faber-Krahn inequality, has been studied. In the following, we will use the following definition.
\begin{definizione}\label{convergence}
    We say that a family $\{\Omega(t)\}_t$ of open, bounded convex sets of $\mathbb{R}^n$ smoothly converges to an open, bounded convex set $\Omega$ as $t$ goes to zero, if there exists a positive number $\delta$ and a one-parameter family of transformations $\Phi_t:\mathbb{R}^n\rightarrow\mathbb{R}^n$, $t\in[0,\delta)$, such that
    \begin{itemize}
        \item [(i)] $\Phi_t$, $\Phi^{-1}_t\in C^\infty(\mathbb{R}^n,\mathbb{R}^n)$ for all $t\in[0,\delta)$,
        \item [(ii)] the maps $t\mapsto \Phi_t(x)$, $t\mapsto \Phi^{-1}_t(x)$ belong to $C^\infty([0,\delta))$ for all $x\in\mathbb{R}^n$,
        \item [(iii)] $\Omega(0)=\Omega$ and $\Omega(t)=\Phi_t(\Omega)$ for all $t\in[0,\delta)$.
    \end{itemize}
\end{definizione}
We now state the main theorem.
\begin{teorema}\label{MainTheo}
    For any $n\geq 2$, there exists a dimensional constant $c_n$ such that, for any one-parameter family of open, bounded convex sets $\Omega(t)\subset\mathbb{R}^n$ of class $C^2$, with fixed $(n-1)$-quermassintegral, smoothly converging to a ball as $t\rightarrow0^+$, then
    \begin{equation*}
        c_n\leq\liminf_{t\rightarrow0^+}\frac{\delta T(\Omega(t))}{\delta_{AF}(\Omega(t))},
    \end{equation*}
    and the constant is 
    \begin{equation*}
        c_n=\frac{n-1}{n}.
    \end{equation*}
    Moreover, for all open, bounded convex sets of class $C^2$ the following inequality holds
    \begin{equation*}
        \frac{\delta T(\Omega)}{\delta_{AF}(\Omega)}\leq1.
    \end{equation*}
\end{teorema}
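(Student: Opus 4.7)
The theorem has two independent halves, which I treat separately.

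For the upper bound, the plan is to derive $\delta T(\Omega) \leq \delta_{AF}(\Omega)$ directly by chaining the two sharp inequalities \eqref{min tor}--\eqref{max tor}. Using that the ball with $(n-1)$-quermassintegral $W_{n-1}(\Omega)$ has radius $\zeta_{n-1}(\Omega) = W_{n-1}(\Omega)/\omega_n$, the explicit value $T(\Omega^*_{n-1})$ in \eqref{max tor} rewrites as a multiple of $W_{n-1}^{n(n+2)}$. Dividing the lower bound $T(\Omega) \geq T(\mathcal{E}(\Omega)) = W_0(\Omega)^{n+2}/((n+2)^n\omega_n^2)$ coming from \eqref{min tor} by this expression and simplifying yields exactly $\omega_n^{(n-1)(n+2)} W_0^{n+2}/W_{n-1}^{n(n+2)} = 1 - \delta_{AF}(\Omega)$, which is the claimed upper bound.

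For the lower bound I would use the shape derivative technique, in the spirit of \cite{Nitsch2012AnIR}. Since both deficits are scale-invariant I may assume the limit convex body is $B_1$. Writing $V := \partial_t \Phi_t\big|_{t=0}$ and $\varphi := V \cdot \nu$ on $\partial B_1 = S^{n-1}$, the key observation is that $B_1$ is simultaneously a maximizer of $T/T(\Omega^*_{n-1})$ (by \eqref{max tor}) and a minimizer of the Alexandrov-Fenchel ratio, so both $\delta T$ and $\delta_{AF}$ have vanishing first shape derivative at $B_1$. Hence the liminf of $\delta T/\delta_{AF}$ equals the ratio of the second shape derivatives along the family, and the question reduces to computing two explicit quadratic forms in $\varphi$. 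Expanding $\varphi = \sum_{\ell\geq 0}\sum_m a_{\ell,m} Y_{\ell,m}$ in real spherical harmonics, the translation modes $\ell=1$ make no contribution, the constraint $W_{n-1}(\Omega(t))=\text{const}$ kills the $\ell=0$ mode to leading order (up to a second-order dilation correction handled separately), and the modes $\ell\geq 2$ decouple orthogonally in both second variations by rotation invariance. Each mode contributes with a coefficient depending only on $\ell$ and $n$ through the spherical Laplacian eigenvalues $\ell(\ell+n-2)$ and elementary integrals; minimizing the resulting ratio over $\ell\geq 2$ should produce $c_n=(n-1)/n$, the infimum being attained on the quadrupole mode $\ell=2$.

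The principal obstacle is the computation of the second shape derivative of $T$, because $T$ is defined through the fully nonlinear Monge-Ampère equation \eqref{original prob}. The strategy relies on the explicit radial solution $u_0(x)=(|x|^2-1)/2$ on $B_1$, for which $D^2 u_0 = I$: both the Hessian and its cofactor matrix collapse to the identity, so the linearization of the Monge-Ampère operator at $u_0$ is just the Laplacian. The first-order correction $w_1$ of the state along the family is then the harmonic extension into $B_1$ of the Hadamard boundary datum $-\varphi$, which in spherical harmonics is immediate. The second-order correction $w_2$ satisfies an inhomogeneous Poisson equation, but I expect to avoid solving it by testing the expansion of $\|u_{\Omega(t)}\|_{L^1}^n$ against $u_0$ and integrating by parts, which reduces the second variation of $T$ to boundary integrals of $\varphi$ and $w_1$ alone. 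Once $T''(0)$ is written mode by mode, the comparison with the classical second variation of the Alexandrov-Fenchel deficit is a short algebraic optimization, and the identification of the sharp constant $(n-1)/n$ and of the extremal mode $\ell=2$ follows by inspection.
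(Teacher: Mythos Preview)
Your overall architecture matches the paper closely: the upper bound via \eqref{min tor}--\eqref{max tor} is exactly what the paper does, and for the lower bound the paper likewise expands both deficits to second order at $B_1$, decomposes $\varphi=V\cdot\nu$ into spherical harmonics, uses that the linearized Monge--Amp\`ere operator at $u_0=(|x|^2-1)/2$ is the Laplacian so that $u_t$ is the harmonic extension of $-\varphi$, and then compares the two quadratic forms mode by mode.

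There is one genuine error in your proposal. You predict that the infimum of the mode--by--mode ratio is attained at the quadrupole $\ell=2$. In the paper's computation (which includes a nontrivial nonlinear contribution $\tfrac{1}{n}\sum_k a_k^2(k^2-k)$ coming from the term $\int_{B_1}(S_n^{ij}(D^2u))_t u_{tij}\,u\,dx$), the ratio for the $k$-th mode is
\[
f(k)=\frac{n+1-2k-k(k+n-2)+\tfrac{1}{n}(k^2-k)}{\,n-1-k(k+n-2)\,},
\]
and one checks $f(2)=\dfrac{n^2+3n-2}{n(n+1)}>\dfrac{n-1}{n}=\displaystyle\lim_{k\to\infty}f(k)$. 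The infimum is therefore \emph{not} achieved at $\ell=2$ but rather as $k\to\infty$; the paper proves this by showing that any critical point of the continuous extension $\tilde f$ on $[2,\infty)$ is a local maximum. So your ``short algebraic optimization'' would give the wrong answer if you stop at $\ell=2$; the leading $k^2$ coefficients in numerator and denominator are $-(1-\tfrac1n)$ and $-1$, and it is their ratio that produces $c_n=(n-1)/n$.

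A second point to be careful about: you say the second variation of $T$ reduces to boundary integrals of $\varphi$ and $w_1$ ``alone''. This is ultimately true, but only after handling the bulk nonlinear term above, which the paper isolates and computes separately (their Appendix~A). It does \emph{not} drop out automatically from testing against $u_0$; at $t=0$ it equals $\tfrac1n\bigl[(\Delta u_t)^2-\sum_{ij}(u_{tij})^2\bigr]$, and one still has to integrate by parts twice to push it to the boundary. Without this $\tfrac1n(k^2-k)$ contribution the numerator would have leading coefficient $-1$ in $k^2$ and the ratio would tend to $1$, not $(n-1)/n$.
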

The paper is organized as follows. In section \ref{Sezione 2} we fix the notations and we recall all the results that we need in the proof. In section \ref{sezione 3} we perform all the computations that we need in the proof of the main theorem, in more details we write the condition that comes out by fixing the $(n-1)$-th quermassintegral and we write in terms of spherical harmonics the two deficits $\delta T$ and $\delta_{AF}$. In section \ref{Sezione 4} we prove Theorem \ref{MainTheo} and, finally, in the appendix we show how to treat the term that comes out from the nonlinearity of the Monge-Ampère operator.

\section{Tools and notations}\label{Sezione 2}
\subsection*{The generalized Kronecker delta and the $k$-th elementary symmetric polynomial}
In the following we use Einstein's notation for repeated indices. We recall that the \emph{generalized Kronecker delta} is defined as
\begin{equation*}
    \delta_{j_1\dots j_k}^{i_1\dots i_k}=\begin{cases}
        1 & \text{ if $i_1,\dots,i_k$ are distinct integers and are an even permutation of $j_1,\dots,j_k$},\\
        -1 & \text{ if $i_1,\dots,i_k$ are distinct integers and are an odd permutation of $j_1,\dots,j_k$},\\
        0 & \text{ in all other cases}.
    \end{cases}
\end{equation*}
The latter can be also seen in terms of an $n\times n$ determinant as follows
\begin{equation*}
    \delta_{j_1\dots j_k}^{i_1\dots i_k}=\begin{vmatrix}
            \delta_{j_1}^{i_1} & \delta_{j_2}^{i_1} & \dots & \delta_{j_k}^{i_1}\\
            \delta_{j_1}^{i_2} & \delta_{j_2}^{i_2} & \dots & \delta_{j_k}^{i_2}\\
            \vdots & \vdots & \ddots & \vdots\\
            \delta_{j_1}^{i_k} & \delta_{j_2}^{i_k} & \dots & \delta_{j_k}^{i_k}
\end{vmatrix}.
\end{equation*}
We mention a contraction property of the generalized Kronecker delta which will be useful in the following. Let $0<p<k$ be two integers, then
\begin{equation}\label{contraction}
    \delta_{j_1\dots j_p}^{i_1\dots i_p}\delta_{j_1\dots j_k}^{i_1\dots i_k}=\frac{p!(n-k+p)!}{(n-k)!}\delta_{j_{p+1}\dots j_k}^{i_{p+1}\dots i_k}.
\end{equation}
In particular, if $p=1$, formula \eqref{contraction} reduces to
\begin{equation}\label{contr2}
    \delta_{j_l}^{i_l}\delta_{j_1\dots j_k}^{i_1\dots i_k}=(n-k+1)\delta_{j_1\dots j_{l-1}j_{l+1}\dots j_k}^{i_1\dots i_{l-1}i_{l+1}\dots i_k},
\end{equation}
where $l\in\{1,\dots,k\}.$
\begin{definizione}
    Consider $\lambda=(\lambda_1,\dots,\lambda_n)\in\R^n$. We define the \emph{$k$-th elementary symmetric polynomial} of $\lambda$ as follows
\begin{equation*}
    \begin{split}
        S_k(\lambda)&=
        \sum_{1\leq i_1<\dots<i_k\leq n}\lambda_{i_1}\dots \lambda_{i_k}, \quad 1\leq k\leq n,\\
        S_0(\lambda)&=1.
    \end{split}
\end{equation*}
\end{definizione}
If $\lambda_1,\dots,\lambda_n$ are the eigenvalues of a matrix $A={A^i_j}$, we denote $S_k(A)=S_k(\lambda)$ and the $k$-th elementary symmetric polynomial can be defined equivalently from the elements of $A$ as follows
\begin{equation}\label{symmPol}
    S_k(A)=\frac{1}{k!}\delta^{j_1\dots j_k}_{i_1\dots i_k}A^{i_1}_{j_1}\dots A^{i_k}_{j_k},
\end{equation}
where we are using the Einstein convention to sum over repeated indices. If $k=1$, $S_1(A)$ is the trace of $A$. On the other hand, if $k=n$, $S_n(A)$ is the determinant of $A$.
\begin{definizione}
    Let $A_1,\dots, A_k$ be $n\times n$ matrices. We define the \emph{Newton transformation tensor} $[T_k]_i^j$ as
\begin{equation*}
    [T_k]_i^j(A_1,\dots,A_k)=\frac{1}{k!}\delta_{ii_1\dots i_k}^{jj_1\dots j_k}A_{j_1}^{i_1}\dots A_{j_k}^{i_k}.
\end{equation*}
\end{definizione}
If $A=A_1=\dots=A_k$, we will simplify the notation as $[T_k]_i^j(A)=[T_k]_i^j(A,\dots,A)$. The Newton transformation tensor allows us to write the \emph{cofactor matrix} of $S_k$. Indeed, the last is the matrix $\{S_k^{ij}(A)\}_{ij}$ whose elements are defined by
\begin{equation*}
    S_k^{ij}(A)=\frac{\partial}{\partial A_j^i}S_k(A).
\end{equation*}
Then, from \eqref{symmPol}, we have
\begin{equation}\label{cofproof}
        S_k^{ij}(A)=\frac{\partial}{\partial A_j^i}\frac{1}{k!}\delta_{ii_1\dots i_{k-1}}^{jj_1\dots j_{k-1}}A_j^iA_{j_1}^{i_1}\dots A_{j_{k-1}}^{i_{k-1}}=\frac{1}{k!}\delta_{ii_1\dots i_{k-1}}^{jj_1\dots j_{k-1}}A_{j_1}^{i_1}\dots A_{j_{k-1}}^{i_{k-1}}=\frac{1}{k}[T_{k-1}]_i^j(A).
\end{equation}

\subsection*{Quermassintegrals and the Alexandrov-Fenchel deficit}  \label{quermass}
 For the content of this section, we will refer to \cite{HCG, Schneider}. Let $\Omega \subset \R^n$ be a non-empty, bounded, convex set, let $B_1$ be the unit ball centered at the origin and $\rho > 0$. We can write the Steiner formula for the Minkowski sum $\Omega+ \rho B_1$ as
\begin{equation}
    \label{Steiner_formula}
    \abs{\Omega + \rho B_1} = \sum_{i=0}^n \binom{n}{i} W_i(\Omega) \rho^{i} .
\end{equation}
The coefficients $W_i(\Omega)$ are known in the literature as quermassintegrals of $\Omega$. In particular, $W_0(\Omega) = \abs{\Omega}$,  $nW_1(\Omega) = P(\Omega)$ and $W_n(\Omega) = \omega_n$ where $\omega_n$ is the measure of $B_1$.

Formula \eqref{Steiner_formula} can be generalized to every quermassintegral, obtaining
\begin{equation} \label{Steinerquermass}
    W_j(\Omega+\rho B_1) = \sum_{i=0}^{n-j} \binom{n-j}{i} W_{j+i}(\Omega) \rho^i, \qquad j=0, \ldots, n-1.
\end{equation}

If $\Omega$ has $C^2$ boundary, the quermassintegrals can be written in terms of principal curvatures of $\Omega$. More precisely, denoting with $\sigma_k$ the $k$-th normalized elementary symmetric function of the principal curvature $\kappa_1, \ldots, \kappa_{n-1}$ of $\partial \Omega$, i.e.
 \begin{equation}\label{meancurv}
 \sigma_0 = 1, \qquad \qquad \sigma_j = \binom{n-1}{j}^{-1} \sum_{1 \leq i_1 < \ldots < i_j \leq n-1} \kappa_{i_1} \ldots \kappa_{i_j}, \qquad j = 1,\ldots,n-1,
\end{equation}
 then, the quermassintegrals can be written as
 \begin{equation}
     \label{quermass_con_curvature}
     W_j(\Omega) = \frac{1}{n} \int_{\partial \Omega} \sigma_{j-1} \, d \mathcal{H}^{n-1}, \qquad j = 1,\ldots, n-1.
 \end{equation}
 We recall that $\sigma_j$ in \eqref{meancurv} is also known as $j$-th mean curvature of $\Omega$, while $\sigma_1$ is called just mean curvature. In the following, we will denote the $(n-1)$-th mean curvature, also known as \emph{Gaussian curvature}, by $k_{\partial\Omega}$.
 We remark that the $j$-th mean curvature is, up to a constant, the $j$-th elementary symmetric polynomial applied to the \emph{second fundamental form} of $\partial\Omega$.
 \begin{definizione}
    Let $\Omega$ be a convex set, we define the \textit{j-th mean radius}, $j=1,\dots,n-1$, of $\Omega$ as
\begin{equation}
    \label{meanradii}
    \zeta_j(\Omega)=\left(\frac{W_j(\Omega)}{\omega_n}\right)^\frac{1}{n-j}.
\end{equation}
\end{definizione}
\begin{oss}
    The $j$-th mean radius $\zeta_j(\Omega)$ of a convex set $\Omega$ is the radius of a ball with same $j$-th quermassintegral as $\Omega$.
\end{oss}

Furthermore, Aleksandrov-Fenchel inequalities hold true
\begin{equation}
    \label{Aleksandrov_Fenchel_inequalities}
    \biggl( \frac{W_j(\Omega)}{\omega_n} \biggr)^{\frac{1}{n-j}} \geq \biggl( \frac{W_i(\Omega)}{\omega_n} \biggr)^{\frac{1}{n-i}}, \qquad 0 \leq i < j \leq n-1,
\end{equation}
where equality holds if and only if $\Omega$ is a ball. When $i=0$ and $j=1$, formula \eqref{Aleksandrov_Fenchel_inequalities} reduces to the classical isoperimetric inequality, i.e.
\[
    P(\Omega) \geq n \omega_n^{\frac{1}{n}} \abs{\Omega}^{\frac{n-1}{n}}.
\]
If we choose $i=0$, $j=n-1$ in \eqref{Aleksandrov_Fenchel_inequalities}, then we have
\begin{equation*}
    W_{n-1}(\Omega)\geq \omega_n^\frac{n-1}{n}W_0(\Omega)^\frac{1}{n}.
\end{equation*}
\begin{definizione}
    We define the \emph{Alexandrov-Fenchel deficit} as
\begin{equation*}
    \delta_{AF}=1-\omega_n^{(n-1)(n+2)}\frac{W_0^{n+2}(\Omega)}{W_{n-1}^{n(n+2)}(\Omega)}.
\end{equation*}
\end{definizione}

\subsection*{The Monge-Ampère operator and the Torsion functional}
 For the content of this section, we refer to \cite{Lions, tso, Wang}. We want to define the Monge-Ampère operator by framing it within a broader class of differential operators: the $k$-Hessian operators. Let $\Omega$ be an open subset of $\R^n$ and let $u\in C^2(\Omega)$. The $k$-Hessian operator is the $k$-th elementary symmetric function of the Hessian matrix $D^2u$, i.e., it is defined as

\begin{equation}\label{kess}
    S_k(D^2 u)= \sum_{1\le i_1< \dots < i_k\le n} \lambda_{i_1}\cdots\lambda_{i_k}, \quad k=1,\dots, n,
\end{equation}
where $\lambda_i$ are the eigenvalues of $D^2u$. We note that $S_k$ is a second-order differential operator and it reduces to the Monge-Ampère operator for $k=n$ and to the Laplace operator for $k=1$. Except for the case $k=1$, these operators are fully nonlinear and non-elliptic, unless one restricts to the class  of $k$-convex functions (see for instance \cite{caffarelli})

$$\mathcal{A}_k(\Omega)= \left\{u\in C^2(\Omega) : S_i (D^2u)\ge 0 \, \text{ in } \, \Omega, \,i=1,\dots, k\right\}.$$

The operator $S_k^{\frac{1}{k}}$ is homogeneous of degree 1, and if we denote by

$$S_k^{ij}(D^2u)=\frac{\partial}{\partial u_{ij}}S_k(D^2u),$$
the Euler identity for homogeneous functions gives
\begin{equation*}
    S_k(D^2u)=\frac{1}{k} S_k^{ij}(D^2u)u_{ij}.
\end{equation*}
A direct computation shows that the $\left(S_k^{1j}(D^2u), \dots, S_k^{nj}(D^2u)\right)$ is divergence-free, hence $S_k(D^2u)$ can be written in divergence form

\begin{equation}
    \label{skdiv}
    S_k(D^2u)=\frac{1}{k}\left(S_k^{ij}(D^2u) u_j\right)_i,
\end{equation}
where the subscripts $i, j$ stand for partial differentiation. 

If $u\in C^2(\Omega)$ and $t$ is a regular point of $u$, on the boundary of $\{u\le t\}$ it is possible to link the $k$-Hessian operator with the $(k-1)$-th mean curvature (see \cite{reilly73, trudinger97})

\begin{equation}
    \label{hksk}
   \binom{n-1}{k-1} \sigma_{k-1}=\frac{S_k^{ij}(D^2u)u_iu_j}{\abs{\nabla u}^{k+1}}.
\end{equation}
The previous identity for $k=n$ is the following
\begin{equation}
    \label{curv}k_{\partial\Omega}=\frac{S_n^{ij}(D^2u)u_iu_j}{\abs{\nabla u}^{n+1}}.
\end{equation}
Given $\Omega$ an open, convex set, we can define the $k$-Torsional rigidity as
    \begin{equation}
        \label{TorsDef}T_k(\Omega)=\max\left\{\frac{\left(\int_\Omega -v\,dx \right)^{k+1}}{\int_\Omega-vS_k(D^2v)\,dx}\,:\, \text{$v\in \mathcal{A}_k(\Omega)$, $v|_{\partial\Omega}=0$ }  \right\}
    \end{equation}
    The maximum in \eqref{TorsDef} is achieved by the solution to
    \begin{equation}
    \label{torsDirich}
        \begin{cases}
            S_k(D^2u_\Omega)= \binom{n}{k}& \text{ in $\Omega$, }\\
            u_\Omega=0 & \text{ on $\partial \Omega$}.
        \end{cases} 
    \end{equation}
    We note that, from the equation in \eqref{torsDirich}, we have
    \begin{equation*}
        T_k(\Omega)=\|u_\Omega\|_{L^1(\Omega)}^k.
    \end{equation*}
    Another consequence of the result in \cite{tso} is that the $k$-Torsional rigidity is maximum on the ball in the class of open, convex sets with prescribed $(k-1)$-th quermassintegral. The latter's proof is based on a symmetrization technique, introduced in \cite{Talenti1981} for the Monge-Ampère equation in the two-dimensional case, then extended in \cite{tso} for the $k$-Hessian equation in any dimension, called quermassintegral-symmetrization (see also \cite{BT} for more details about this symmetrization, \cite{DPGC} for the extension to the anisotropic setting, \cite{MS} for some quantitative improvement).
When $\Omega=B_1$, the solution of \eqref{torsDirich} is given by
\begin{equation}\label{Sol on ball}
    u_{B_1}(x)=\frac{|x|^2-1}{2}.
\end{equation}
We note that, since we are interested in convex functions, here and in what follows $u<0$ in $\Omega$. Throughout the remainder of the paper we restrict to the case $k=n$. For notational convenience, we denote $T_n(\cdot)$ simply by $T(\cdot)$.
\section{Computations of the Shape Derivatives}\label{sezione 3}
In this section we will perform all the computations that we need to prove the main result.

\subsection*{First and Second Variation of $W_{n-1}$}
We remark that, since we are fixing the $(n-1)$-quermassintegral, it holds that
\begin{equation}\label{condizioni}
    \frac{d}{dt}W_{n-1}(\Omega(t))=\frac{d^2}{dt^2}W_{n-1}(\Omega(t))=0.
\end{equation}
To make the previous two conditions explicit, we want to parametrize the $k$-th mean curvature of $\Omega(t)$. From \cite{VanBlargan2022QuantitativeQI} we have the following lemma
\begin{lemma}
Let $\Omega\subset\mathbb{R}^n$. If we can parametrize the boundary of $\Omega$ with a function $r\in C^2(\partial B_1)$, then we have
    \begin{equation*}
        \sigma_k=\binom{n-1}{k}^{-1}(r^2+|\nabla_\xi r|^2)^{-\frac{k+2}{2}}\sum_{m=0}^k(-1)^m\binom{n-1-m}{k-m}r^{-m}\left(r^2S_m(D^2r)+\frac{n-1+k-2m}{n-1-m}r_ir_j[T_m]_i^j(D^2r) \right).
    \end{equation*}
\end{lemma}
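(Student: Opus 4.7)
The plan is to compute the $k$-th normalized elementary symmetric polynomial of the principal curvatures of $\partial\Omega$ directly from the parametrization $X(\xi)=r(\xi)\xi$ for $\xi\in\partial B_1$, reading off the shape operator and then rewriting it with the generalized Kronecker-delta formalism of $S_k$.

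First, at a fixed $\xi_0\in\partial B_1$ I would work in geodesic normal coordinates on the sphere, with an orthonormal tangent frame $\{e_1,\ldots,e_{n-1}\}$. Expanding $\xi(y)$ to second order in $y$ and multiplying by $r(\xi(y))$ gives, at $y=0$, the derivatives $X_i=r_i\,\xi_0+r\,e_i$ and $X_{ij}=(r_{ij}-r\,\delta_{ij})\xi_0+r_i e_j+r_j e_i$, where $r_i$ and $r_{ij}$ are the components of $\nabla_\xi r$ and of the spherical Hessian $D^2r$ in the chosen frame. From these I would read off the first fundamental form $g_{ij}=r^2\delta_{ij}+r_i r_j$, the outward unit normal $\nu=(r\,\xi_0-r_i e_i)/W$ with $W:=(r^2+|\nabla_\xi r|^2)^{1/2}$, and the second fundamental form $h_{ij}=-X_{ij}\cdot\nu=W^{-1}(r^2\delta_{ij}+2r_i r_j-r\,r_{ij})$, with the sign chosen so that spheres have positive principal curvatures.

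Second, I would invert $g$ by Sherman--Morrison to obtain $g^{ij}=r^{-2}(\delta^{ij}-W^{-2}r^i r^j)$, whence the shape operator reads
\[
A \;=\; g^{-1}h \;=\; \frac{1}{r^2 W}\left(I-\frac{\nabla_\xi r\otimes\nabla_\xi r}{W^2}\right)\left(r^2\, I+2\,\nabla_\xi r\otimes\nabla_\xi r - r\,D^2 r\right).
\]
Using the rank-one identity $(\nabla_\xi r\otimes\nabla_\xi r)^2=|\nabla_\xi r|^2(\nabla_\xi r\otimes\nabla_\xi r)$, this multiplies out to an explicit linear combination, with coefficients rational in $r$ and $W$, of the four tensors $I$, $\nabla_\xi r\otimes\nabla_\xi r$, $D^2r$, and $(\nabla_\xi r\otimes\nabla_\xi r)\cdot D^2 r$.

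Third, I would apply the representation $S_k(A)=\frac{1}{k!}\delta^{j_1\ldots j_k}_{i_1\ldots i_k}A^{i_1}_{j_1}\cdots A^{i_k}_{j_k}$ from \eqref{symmPol} and expand, grouping terms by the total number $m$ of factors of $D^2 r$ appearing in the product ($0\le m\le k$). Total antisymmetry of the generalized Kronecker delta forces at most one copy of the rank-one tensor $\nabla_\xi r\otimes\nabla_\xi r$ to survive in any nonzero term, so only two subcases arise at each level $m$. The no-rank-one subcase collapses, after contracting the remaining $k-m$ identity slots via \eqref{contr2}, to $\binom{n-1-m}{k-m}\,r^2\,S_m(D^2 r)$ up to the global prefactors, while the one-rank-one subcase collapses, after a similar contraction together with the Newton-tensor identity \eqref{cofproof}, to the $r_i r_j\,[T_m]^j_i(D^2 r)$ piece. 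Summing the two with the correct signs, multiplying by $\binom{n-1}{k}^{-1}$, and factoring out $W^{-(k+2)}$ yields the stated identity; the rational coefficient $\frac{n-1+k-2m}{n-1-m}$ emerges from merging the $-\nabla_\xi r\otimes\nabla_\xi r/W^2$ contribution of $g^{-1}$, the $+2\nabla_\xi r\otimes\nabla_\xi r$ contribution of $h$, and their cross-term with $D^2 r$.

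The main obstacle is precisely this combinatorial bookkeeping — tracking all signs and checking that the three independent sources of rank-one contributions collapse into the single coefficient $\frac{n-1+k-2m}{n-1-m}$, and that the formally singular case $m=n-1$ is harmless because $[T_{n-1}]$ vanishes identically on an $(n-1)\times(n-1)$ matrix. I would cross-check the final formula against the round sphere $r\equiv\mathrm{const}$, where only the $m=0$ term survives and correctly reproduces $\sigma_k=r^{-k}$, and against the planar case $n=2$, $k=1$, where it collapses to the classical polar-coordinate curvature $\kappa=(r^2+2(r')^2-r\,r'')/(r^2+(r')^2)^{3/2}$.
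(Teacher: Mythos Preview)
The paper does not give its own proof of this lemma: immediately before the statement it writes ``From \cite{VanBlargan2022QuantitativeQI} we have the following lemma'' and then merely quotes the formula, so there is no argument to compare against. Your proposal is therefore not duplicating anything in the paper; it is supplying a derivation the paper outsources.

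As for the content of your sketch, the strategy is the standard and correct one: compute $g_{ij}=r^2\delta_{ij}+r_ir_j$, $h_{ij}=W^{-1}(r^2\delta_{ij}+2r_ir_j-r\,r_{ij})$ with $W=(r^2+|\nabla_\xi r|^2)^{1/2}$, invert $g$ by Sherman--Morrison, form $A=g^{-1}h$, and then expand $S_k(A)$ via \eqref{symmPol}. The key structural observations you make are right: the rank-one piece $\nabla_\xi r\otimes\nabla_\xi r$ can appear at most once in any surviving term because of the total antisymmetry of the generalized Kronecker delta, and contracting out the identity factors with \eqref{contr2} produces the binomial $\binom{n-1-m}{k-m}$, while the surviving rank-one slot together with the $D^2r$ slots gives exactly $r_ir_j[T_m]^j_i(D^2r)$ by \eqref{cofproof}. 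Your sanity checks ($r\equiv$ const giving $\sigma_k=r^{-k}$, and the $n=2$, $k=1$ polar-curvature formula) are well chosen. The only place to be careful is the bookkeeping you already flag: the three separate sources of rank-one terms (from $g^{-1}$, from the $2r_ir_j$ in $h$, and from the cross-term $(\nabla_\xi r\otimes\nabla_\xi r)D^2r$) must be tracked with their powers of $r$ and $W$ to assemble into $\frac{n-1+k-2m}{n-1-m}$; this is tedious but mechanical, and your outline shows you understand where each piece comes from.
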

\begin{teorema}
    Let $\{\Omega(t)\}_t$ be a family of open, bounded convex sets of $\R^n$, with $\Omega(0)=B_1$. Then we can write the $(n-1)$-th quermassintegral as follows
    \begin{equation*}
        \begin{split}
            W_{n-1}(\Omega(t))&=\omega_n+\frac{t}{n}\int_{\partial B_1}V(\xi)\,d\mathcal{H}^{n-1}_\xi\\
            &+\frac{t^2}{2n(n-1)}\int_{\partial B_1}\left[(n-1)A(\xi)+2(n-3)S_2(D^2V)-(n^2-6n+7)|\nabla_\xi V|^2\right]\,d\mathcal{H}^{n-1}_\xi,
        \end{split}
    \end{equation*}
    where we have represented the boundary $\partial\Omega(t)$ as
    \begin{equation*}
        r(\xi,t)=1+tV(\xi)+\frac{t^2}{2}A(\xi)+o(t^2),
    \end{equation*}
    with $o(t^2)$ uniform in $\xi$.
\end{teorema}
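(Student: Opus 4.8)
The plan is to reduce the statement to a one‑variable Taylor expansion in $t$ of an integral over the fixed sphere $\partial B_1$, by combining the curvature representation of $W_{n-1}$ with the lemma above. First I would use that, since $\Omega(0)=B_1$ and the deformation is smooth, for $t$ small $\Omega(t)$ is star‑shaped with respect to the origin, so its boundary is a radial graph $\partial\Omega(t)=\{\,r(\xi,t)\,\xi:\xi\in\partial B_1\,\}$, with $r(\cdot,t)\in C^2(\partial B_1)$ depending smoothly on $t$ and $r(\xi,0)\equiv1$ — this is exactly the representation $r(\xi,t)=1+tV(\xi)+\tfrac{t^2}{2}A(\xi)+o(t^2)$ in the statement. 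Then, by \eqref{quermass_con_curvature} with $j=n-1$ together with the area element of a radial graph,
\[
W_{n-1}(\Omega(t))=\frac1n\int_{\partial\Omega(t)}\sigma_{n-2}\,d\mathcal H^{n-1}=\frac1n\int_{\partial B_1}\sigma_{n-2}\,r^{n-2}\sqrt{r^2+|\nabla_\xi r|^2}\,d\mathcal H^{n-1}_\xi,
\]
so that everything becomes an integral over the $t$‑independent domain $\partial B_1$ of an explicit expression in $r$ and its covariant derivatives.

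Next I would substitute the lemma with $k=n-2$: it writes $\sigma_{n-2}$ as $\tfrac1{n-1}(r^2+|\nabla_\xi r|^2)^{-n/2}$ times a finite sum over $m=0,\dots,n-2$ of terms built from $r^{-m}$, $S_m(D^2r)$ and $r_ir_j[T_m]^j_i(D^2r)$; the factor $\sqrt{r^2+|\nabla_\xi r|^2}$ of the area element then combines with it, leaving $\tfrac1{n-1}r^{n-2}(r^2+|\nabla_\xi r|^2)^{-(n-1)/2}$ in front. The decisive observation is the order in $t$: since $D^2r=tD^2V+\tfrac{t^2}{2}D^2A+o(t^2)$ is $O(t)$ and vanishes at $t=0$, one has $S_m(D^2r)=O(t^m)$ and $[T_m]^j_i(D^2r)=O(t^m)$, while $r_i=O(t)$; hence the $m$‑th summand contributes at order $t^m$ through its $S_m$‑part and at order $t^{m+2}$ through its $r_ir_j[T_m]^j_i$‑part. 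Therefore, up to $o(t^2)$, only $m=0$ (where the relevant piece is $(n-1)r^2+(2n-3)|\nabla_\xi r|^2$), $m=1$ (only $-(n-2)\,r\,S_1(D^2r)$, with $S_1(D^2r)=\Delta_\xi r$) and $m=2$ (only $(n-3)S_2(D^2r)$, with $S_2(D^2r)=t^2S_2(D^2V)+o(t^2)$) survive.

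It then remains to Taylor expand to second order the surviving ingredients — $r^2$, $|\nabla_\xi r|^2$, $r\,\Delta_\xi r$, and the two fractional powers $r^{n-2}$ and $(r^2+|\nabla_\xi r|^2)^{-(n-1)/2}$ — to substitute, collect the coefficients of $t^0,t^1,t^2$, and integrate over $\partial B_1$, discarding exact‑divergence terms since $\int_{\partial B_1}\Delta_\xi V\,d\mathcal H^{n-1}_\xi=\int_{\partial B_1}\Delta_\xi A\,d\mathcal H^{n-1}_\xi=0$ by the divergence theorem on the sphere. The $t^0$‑term reproduces $W_{n-1}(B_1)=\omega_n$, the $t^1$‑term collapses to $\tfrac1n\int_{\partial B_1}V\,d\mathcal H^{n-1}_\xi$, and once the $V^2$‑ and $(V\Delta_\xi V)$‑contributions cancel, the $t^2$‑term reduces to $\tfrac1{2n(n-1)}\int_{\partial B_1}\big[(n-1)A+2(n-3)S_2(D^2V)-(n^2-6n+7)|\nabla_\xi V|^2\big]\,d\mathcal H^{n-1}_\xi$, which is the asserted formula.

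The genuinely delicate part — and where a sign or a coefficient is most likely to go astray — is the algebraic bookkeeping of the $t^2$‑coefficient: one must expand the two fractional powers correctly, track the mutual cancellations, and be careful that $D^2r$ is the covariant Hessian on the round sphere $\partial B_1$ (so it vanishes on constants, which is precisely why the $S_m$‑parts with $m\ge1$ only enter at order $t^m$). Beyond this computation, no ingredient other than the lemma and the divergence theorem on $\partial B_1$ is needed.
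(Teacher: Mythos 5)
Your proposal is correct and follows essentially the same route as the paper: write $W_{n-1}$ via the curvature representation, pass to the radial graph over $\partial B_1$ with area element $r^{n-2}\sqrt{r^2+|\nabla_\xi r|^2}\,d\mathcal H^{n-1}_\xi$, substitute the cited lemma with $k=n-2$, observe that $S_m(D^2r)=O(t^m)$ and $r_ir_j[T_m]_i^j(D^2r)=O(t^{m+2})$ so only $m=0,1,2$ (and only the $m=0$ gradient term) survive to order $t^2$, then Taylor-expand the two fractional powers and kill the $\Delta_\xi V$, $\Delta_\xi A$ terms by the divergence theorem on the sphere. The only caveat is that you state the plan and identify where cancellations occur without actually carrying out the second-order bookkeeping, which you rightly flag as the delicate step; the paper does that computation explicitly.
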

\begin{proof}
From the previous lemma, we have
\begin{equation}\label{PaolCor}
    \begin{split}
        W_{n-1}(\Omega(t))&=\frac{1}{n(n-1)}\int_{\partial\Omega(t)}\sigma_{n-2}\,d\mathcal{H}^{n-1}\\
        &=\frac{1}{n(n-1)}\int_{\partial B_1}\sum_{m=0}^{n-2}\frac{(-1)^m(n-1-m)r^{n-m-2}}{(r^2+|\nabla_\xi r|^2)^\frac{n-1}{2}}\left(r^2S_m(D^2r)+\frac{2n-2m-3}{n-1-m}r_ir_j[T_m]_i^j(D^2r) \right)\,d\mathcal{H}_\xi^{n-1}.
    \end{split}
\end{equation}
We observe that, when we represent the boundary $\partial\Omega(t)$ as 
\begin{equation*}
    r(\xi,t)=1+tV(\xi)+\frac{t^2}{2}A(\xi)+o(t^2),
\end{equation*}
the second term in the sum in \eqref{PaolCor} is of higher order in $t$, except when $m=0$ that gives us
\begin{equation*}
    (2n-3)r^{n-2}\delta_i^j r_i r_j=  (2n-3)r^{n-2}|\nabla_\xi r|^2.
\end{equation*}
In the first term of the sum in \eqref{PaolCor} we have
\begin{equation*}
    \begin{split}
        &S_0(D^2r)=1,\quad S_1(D^2r)=t\Delta_\xi V+\frac{t^2}{2}\Delta_\xi A+o(t^2),\\
        &S_2(D^2r)=t^2S_2(D^2V)+o(t^2),\\
        &S_m(D^2r)=o(t^2),\quad \forall\,m\ge 3.
    \end{split}
\end{equation*}
Then we just have to consider
\begin{equation}\label{1}
    \frac{(2n-3)r^{n-2}|\nabla_\xi r|^2+\sum_{m=0}^{2}(-1)^m(n-1-m)r^{n-m}S_m(D^2r)}{(r^2+|\nabla_\xi r|^2)^\frac{n-1}{2}}=:\frac{f}{g}.
\end{equation}

Then we can expand
\begin{equation}\label{2}
    \begin{split}
         &f=(2n-3)r^{n-2}|\nabla_\xi r|^2+(n-1)r^nS_0(D^2r)-(n-2)r^{n-1}S_1(D^2r)+(n-3)r^{n-2}S_2(D^2r)+o(t^2)\\
         &=n-1+t[n(n-1)V-(n-2)\Delta_\xi V]\\[1.5ex]
         &+\frac{t^2}{2}\left[n(n-1)^2V^2+n(n-1)A-2(n-1)(n-2)V\Delta_\xi V-(n-2)\Delta_\xi A+2(n-3)S_2(D^2V)+(4n-6)|\nabla_\xi V|^2\right]\\[1.5ex]
         &+o(t^2).
    \end{split} 
\end{equation}
Now we expand $g$. We start with
\begin{equation*}
    (r^2+|\nabla_\xi r|^2)^\frac{1}{2}=1+tV+\frac{t^2}{2}(A+|\nabla_\xi V|^2)+o(t^2),
\end{equation*}
and then
\begin{equation}\label{3}
    \begin{split}
        g&=(r^2+|\nabla_\xi r|^2)^\frac{n-1}{2}=\left\{1+\left[tV+\frac{t^2}{2}(A+|\nabla_\xi V|^2) \right] \right\}^{n-1}+o(t^2)\\
        &=\sum_{j=0}^{n-1}\binom{n-1}{j}\left[tV+\frac{t^2}{2}(A+|\nabla_\xi V|^2) \right]^j+o(t^2)\\
        &=1+t(n-1)V+\frac{t^2}{2}[(n-1)(A+|\nabla_\xi V|^2)+(n-1)(n- 2 )V^2]+o(t^2).
    \end{split}
\end{equation}
Finally, joining \eqref{PaolCor}-\eqref{1}-\eqref{2}-\eqref{3}, we get
\begin{equation}
\label{espansione W_{n-1}}
    \begin{split}
        &W_{n-1}(\Omega(t))=\omega_n+\frac{t}{n(n-1)}\int_{\partial B_1}[(n-1)V-(n-2)\Delta_\xi V]\,d\mathcal{H}^{n-1}_\xi\\
        &+\frac{t^2}{2n(n-1)}\int_{\partial B_1}\left[(n-1)A-(n-2)\Delta_\xi A+2(n-3)S_2(D^2V)-(n^2-6n+7)|\nabla_\xi V|^2 \right]\,d\mathcal{H}^{n-1}_\xi+o(t^2).
    \end{split}
\end{equation}
Now, applying \emph{the Green-Beltrami identity} (\cite{AH}), we have
\begin{equation*}
    \int_{\partial B_1}\Delta_\xi V\,d\mathcal{H}_\xi^{n-1}=\int_{\partial B_1}\Delta_\xi A\,d\mathcal{H}_\xi^{n-1}=0,
\end{equation*}
and the claim follows.
\end{proof}
\begin{oss}
    From the previous theorem, if we fix $W_{n-1}(\Omega(t))=\omega_n$ for all $t\geq0$, then the two conditions \eqref{condizioni} are
    \begin{equation}
    \label{der sec}
        \begin{split}
            &\int_{\partial B_1}V\,d\mathcal{H}^{n-1}_\xi=0,\\
            &\int_{\partial B_1}\left[(n-1)A-(n^2-6n+7)|\nabla_\xi V|^2+2(n-3)S_2(D^2V) \right]\,d\mathcal{H}^{n-1}_\xi=0.
        \end{split}
    \end{equation}
\end{oss}

\subsection*{Shape derivative of the torsion functional and the Alexandrov-Fenchel deficit}
\label{Torsion Deficit}
We start this section by considering the torsion problem for the Monge-Ampère operator in $\Omega(t)$ and the problems related to the time derivatives $u_t$, $u_{tt}$. If we write the operator in divergence form, as explained in \eqref{skdiv}, then we are considering the solution to
\begin{equation}\label{probtor}
    \begin{cases}
        \frac{1}{n}(S_n^{ij}(D^2u)u_j)_i=1 & \text{ in } \Omega(t)\\
         u=0 & \text{ on } \partial\Omega(t)
    \end{cases}.
\end{equation}
Then the problem derived with respect to the time parameter is the following (see also \cite{BNT2})
\begin{equation}\label{probder}
    \begin{cases}
        (S_n^{ij}(D^2u)u_{tj})_i=0 & \text{ in } \Omega(t)\\
        u_t+\nabla u\cdot V=0 & \text{ on } \partial\Omega(t)
    \end{cases},
\end{equation}
where $V=\frac{\partial\Phi_t}{\partial t}$ is the velocity field, and the second derivative of the problem is given by
\begin{equation}\label{probdersec}
    \begin{cases}
        S_n^{ij}(D^2u)u_{ttij}+(S_n^{ij}(D^2u))_tu_{tij}=0 & \text{ in } \Omega(t)\\
        u_{tt}+V^T\cdot D^2 u\cdot V+\nabla u \cdot V_t+2\nabla u_t\cdot V =0 & \text{ on } \partial\Omega(t)
    \end{cases},
\end{equation}
where the superscript $T$ denotes the transpose of a matrix and $V_t=\frac{\partial^2\Phi_t}{\partial t^2}$ is the accelleration field.

\begin{oss}\label{RemarkVel}
    From the boundary condition in \eqref{probder} we have that
    \begin{equation*}
       V=-\frac{u_t}{\abs{\nabla u}}\nu,
    \end{equation*}
    where $\nu$ is the outer normal vector.
\end{oss}
Let us now define $j(t)=T(\Omega(t))$. We now aim to compute the first and second derivative of $j(t)$ at $t=0$. This is the content of the next result.

\begin{teorema}\label{3.3}
    Let $\Phi_t$ be a family of $C^2$ deformations such that $\Phi_0$ is the identity and $W_{n-1}(B_1)=W_{n-1}(\Phi_t(B_1))$ for all $t\geq0$. Then the first and second shape derivatives of the torsion functional with respect to this deformation are given by
    \begin{equation*}
        \begin{split}
            j'(0)&=\frac{\omega_n^{n-1}}{(n+2)^{n-1}}\int_{\partial B_1}u_t \,d\mathcal{H}^{n-1},\\
            j''(0)&=\frac{n-1}{n}\omega_n^{n-2}\left(\int_{\partial B_1}u_t\,d\mathcal{H}^{n-1} \right)^2+\frac{\omega_n^{n-1}}{(n+2)^{n-1}}\\
            &\times\left\{\int_{\partial B_1}\left[u_t^2+V_t|_{t=0}\cdot \nu-2u_t\frac{\partial u_t}{\partial \nu}\right]\,d\mathcal{H}^{n-1}+\int_{B_1}(S_n^{ij}(D^2u))_tu_{tij}u\,dx +n\int_{\partial B_1}(V|_{t=0}\cdot \nu)^2\,d\mathcal{H}^{n-1} \right\}.
        \end{split}
    \end{equation*}
\end{teorema}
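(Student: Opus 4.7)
The plan is to write $j(t) = I(t)^n$ where $I(t) = -\int_{\Omega(t)} u(x,t)\,dx > 0$. The explicit solution $u_{B_1}(x) = (|x|^2-1)/2$ gives $I(0) = \omega_n/(n+2)$, and the chain rule supplies
\[
j'(0) = n\,I(0)^{n-1} I'(0), \qquad j''(0) = n(n-1)\,I(0)^{n-2}(I'(0))^2 + n\,I(0)^{n-1} I''(0),
\]
so the task reduces to computing $I'(0)$ and $I''(0)$. Reynolds' transport together with $u = 0$ on the moving boundary give $I'(t) = -\int_{\Omega(t)} u_t\,dx$ and $I''(t) = -\int_{\Omega(t)} u_{tt}\,dx - \int_{\partial\Omega(t)} u_t(V\cdot\nu)\,d\mathcal{H}^{n-1}$; at $t=0$, Remark \ref{RemarkVel} combined with $|\nabla u_{B_1}|=1$ on $\partial B_1$ turns the last boundary term into $\int_{\partial B_1} u_t^2\,d\mathcal{H}^{n-1}$.

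The heart of the computation is the reduction of the bulk integrals of $u_t$ and $u_{tt}$ to boundary data. For each, I would substitute the divergence form $\frac{1}{n}(S_n^{ij}u_j)_i = 1$ of \eqref{probtor} and integrate by parts twice, in each step exploiting the symmetry of the cofactor matrix and its row-wise divergence-free property $(S_n^{ij})_i = 0$. At first order, the residual bulk term is $-\frac{1}{n}\int u\, S_n^{ij}u_{tij}\,dx$, which vanishes because the linearized equation \eqref{probder} is precisely $S_n^{ij}u_{tij} = 0$; the remaining boundary integral is computed at $t=0$ on $\partial B_1$ using $D^2u_{B_1} = I$ (hence $S_n^{ij} = \delta^{ij}$) together with $\nabla u = x = \nu$, giving $S_n^{ij}u_j\nu_i = 1$. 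At second order the analogous double integration by parts produces $-\frac{1}{n}\int u\, S_n^{ij}u_{ttij}\,dx$, and now \eqref{probdersec} gives $S_n^{ij}u_{ttij} = -(S_n^{ij})_t u_{tij}$, converting the residual into the volume term $\frac{1}{n}\int_{B_1} u(S_n^{ij})_t u_{tij}\,dx$. The companion boundary contribution from $u_{tt}$ is handled by substituting the boundary condition $u_{tt} = -V^T D^2u\,V - \nabla u\cdot V_t - 2\nabla u_t\cdot V$ from \eqref{probdersec} and simplifying on $\partial B_1$ using $D^2u = I$, $\nabla u = \nu$, and $V = -u_t\nu$ to arrive at the integrand $u_t^2 + V_t\cdot\nu - 2u_t\,\partial_\nu u_t$. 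Assembling these pieces with $I(0) = \omega_n/(n+2)$ yields the stated expressions for $j'(0)$ and $j''(0)$.

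The main obstruction is that the second-order reduction does not close in a purely linear fashion: the leftover bulk term $\int_{B_1} u(S_n^{ij})_t u_{tij}\,dx$ is the genuine signature of the nonlinearity of the Monge-Ampère operator (it is absent in the Laplacian case $k=1$, where $S_1^{ij}=\delta^{ij}$ is independent of $u$). Within this theorem I simply carry it as a volume quantity; its evaluation via a spherical-harmonics decomposition of $u_t$, which is needed to extract the dimensional constant $c_n$ in Theorem \ref{MainTheo}, is the content deferred to the appendix as previewed in the introduction.
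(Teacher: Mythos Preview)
Your proposal is correct and follows essentially the same route as the paper: write $j=I^n$, use the transport formula together with $u|_{\partial\Omega(t)}=0$ to get $I'=-\int u_t$ and $I''=-\int u_{tt}-\int_{\partial}u_t(V\cdot\nu)$, then replace the factor $1$ by $\tfrac1n(S_n^{ij}u_j)_i$ and integrate by parts twice (using $(S_n^{ij})_i=0$ and $u=0$ on the boundary) so that the residual bulk term is killed by \eqref{probder} at first order and converted via \eqref{probdersec} into the nonlinear volume term at second order. The only cosmetic difference is that the paper first writes the boundary integrals for general $t$ through the curvature identity \eqref{curv} and evaluates $V^{T}D^{2}u\,V$ via the decomposition $\nu^{T}D^{2}u\,\nu=\Delta u-H|\nabla u|$ (getting $n-(n-1)=1$), whereas you specialize immediately to $t=0$ using $D^{2}u_{B_1}=I$, $\nabla u=\nu$; both yield the same integrand $u_t^2+V_t\cdot\nu-2u_t\,\partial_\nu u_t$.
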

\begin{proof}
Let $\Omega(t)=\Phi_t(B_1)$. We want to compute the first and second shape derivative of $T(\Omega(t))$. We recall the well-known \emph{Hadamard's formula} \cite{HP}
\begin{equation}\label{Hadamard}
    \frac{d}{dt}\int_{\Omega(t)}f\,dx=\int_{\Omega(t)} [f_t+\mathrm{div}(fV)]\,dx.
\end{equation}
We recall that $j(t)$ is defined as follows
\begin{equation*}
    j(t)=T(\Omega(t))=\left(\int_{\Omega(t)}-u\,dx \right)^n,
\end{equation*}
where $u$ is the solution to \eqref{probtor}. Then, from \eqref{skdiv}, \eqref{curv}, \eqref{probder} and \eqref{Hadamard}, since $u=0$ on $\partial\Omega(t)$, we have
\begin{equation*}
    \begin{split}
    j'(t)&=n\left(\int_{\Omega(t)}-u\,dx\right)^{n-1}\int_{\Omega(t)}-u_t\,dx=\left(\int_{\Omega(t)}-u\,dx\right)^{n-1}
    \int_{\Omega(t)}-u_t(S_n^{ij}(D^2u)u_j)_i\,dx\\
    &=\left(\int_{\Omega(t)}-u\,dx\right)^{n-1}
    \int_{\partial\Omega(t)}\frac{S_n^{ij}(D^2u)u_ju_i}{\abs{\nabla u}}V\cdot\nu\,d\mathcal{H}^{n-1}+\left(\int_{\Omega(t)}-u\,dx\right)^{n-1}
    \int_{\Omega(t)}-u_t(S_n^{ij}(D^2u)u_j)_i\,dx\\
    &=\left(\int_{\Omega(t)}-u\,dx\right)^{n-1}\int_{\partial\Omega(t)}k_{\partial\Omega(t)}|\nabla u|^{n+1}V\cdot \nu\, d\mathcal{H}^{n-1},
    \end{split}
\end{equation*}
where the last equality follows from an integration by parts. Iterating \eqref{Hadamard} we can write the second derivative as follows
\begin{equation*}
    \begin{split}
        j''(t)&=\frac{n-1}{n}\left(\int_{\Omega(t)}-u\,dx\right)^{n-2}\left(\int_{\partial\Omega(t)}k_{\partial\Omega(t)}|\nabla u|^{n+1}V\cdot \nu\, d\mathcal{H}^{n-1}\right)^2\\
        &+n\left(\int_{\Omega(t)}-u\,dx\right)^{n-1}\int_{\Omega(t)}[-u_{tt}-\mathrm{div}(u_tV)]\,dx.
    \end{split}
\end{equation*}
On the other hand, from \eqref{skdiv}, \eqref{curv} and \eqref{probdersec}, we have that
\begin{align*}
    \int_{\Omega(t)}-u_{tt}\,dx&=\frac{1}{n}\int_{\Omega(t)}-u_{tt}(S_n^{ij}(D^2u)u_j)_i\,dx=\frac{1}{n}\int_{\partial\Omega(t)}-u_{tt}k_{\partial\Omega(t)}|\nabla u|^n\,d\mathcal{H}^{n-1}-\frac{1}{n}\int_{\Omega(t)}S_n^{ij}(D^2u)u_{ttij}u\,dx\\
    &=\frac{1}{n}\int_{\partial\Omega(t)}k_{\partial\Omega(t)}|\nabla u|^n(V^T\cdot D^2 u\cdot V+\nabla u \cdot V_t+2\nabla u_t\cdot V)\,d\mathcal{H}^{n-1}+\frac{1}{n}\int_{\Omega(t)}(S_n^{ij}(D^2u))_tu_{tij}u\,dx , \\
    \int_{\Omega(t)}-\mathrm{div}&(u_tV)\,dx  =-\int_{\partial\Omega(t)}u_tV\cdot \nu\,d\mathcal{H}^{n-1}=\int_{\partial\Omega(t)}|\nabla u|(V\cdot \nu)^2\,d\mathcal{H}^{n-1}.
\end{align*}
Then the second shape derivative of the torsion is
\begin{equation*}
    \begin{split}
        &j''(t)=\frac{n-1}{n}\left(\int_{\Omega(t)}-u\,dx\right)^{n-2}\left(\int_{\partial\Omega(t)}k_{\partial\Omega(t)}|\nabla u|^{n+1}V\cdot \nu\, d\mathcal{H}^{n-1}\right)^2\\
        &+ \left(\int_{\Omega(t)}-u\,dx\right)^{n-1}\left[\int_{\partial\Omega(t)}k_{\partial\Omega(t)}|\nabla u|^n (V^T\cdot D^2 u\cdot V+\nabla u \cdot V_t+2\nabla u_t\cdot V)\,d\mathcal{H}^{n-1}+\underbrace{\int_{\Omega(t)}(S_n^{ij}(D^2u))_tu_{tij}u\,dx}_{NT}\right]\\
        &+n\left(\int_{\Omega(t)}-u\,dx\right)^{n-1}\int_{\partial\Omega(t)}|\nabla u|(V\cdot \nu)^2\,d\mathcal{H}^{n-1},
    \end{split}
\end{equation*}
where "NT" is the effect of the nonlinearity of the operator.

From Remark \ref{RemarkVel} we have that  
\begin{equation*}
    V|_{t=0}=-\frac{u_t}{|\nabla u|}\nu
\end{equation*}
where $u_t$ is the solution of
\begin{equation}
\label{omega}
    \begin{cases}
        (S_n^{ij}(D^2u)u_{ti})_j=0 & \text{ in } B_1\\
        u_t=-|\nabla u|V|_{t=0}\cdot \nu & \text{ on } \partial B_1,
    \end{cases}
\end{equation}
where, since both $\delta T$ and $\delta_{AF}$ are invariant with respect to homotheties, we have made the normalization \\$W_{n-1}(\Omega(t))=\omega_n$ for all $t\geq0$, that is, $\Omega(t)_{n-1}^*=B_1$ for all $t\geq0$.
We observe that, when $t=0$,  we have
\begin{itemize}
    \item $\nabla u\cdot V_t|_{t=0}=|\nabla u|\nu\cdot V_t|_{t=0}$,
    \item $2\nabla u_t\cdot V|_{t=0}=-\frac{2}{|\nabla u|}u_t\frac{\partial u_t}{\partial \nu}$,
    \item $k_{\partial\Omega(t)}|_{t=0}=\frac{1}{\zeta_{n-1}^{n-1}(\Omega(0))}=1$,
\end{itemize}
and 
\begin{equation*}
    \begin{split}
        V^T\cdot D^2u\cdot V|_{t=0}&=\frac{u_t^2}{|\nabla u|^2}\nu^T\cdot D^2u\cdot \nu =\frac{u_t^2}{|\nabla u|^4}\nabla u ^T\cdot D^2u\cdot\nabla u\\
        &=\frac{u_t^2}{|\nabla u|^4}\left(\Delta u|\nabla u|^2-\mathrm{div}\left(\frac{\nabla u}{|\nabla u|} \right)|\nabla u|^3 \right)\\
        &=\frac{u_t^2}{|\nabla u|^2}\Delta u-\frac{u_t^2}{|\nabla u|}H,
    \end{split}
\end{equation*}
where $H$ is the non-normalized mean curvature, that is
\begin{equation*}
    H=(n-1)\sigma_1,
\end{equation*}
and $\sigma_1$ is defined in \eqref{meancurv}.

Then, since $u$ is given by \eqref{Sol on ball}, we have that $|\nabla u|^n=(x_1^2+\dots+x_n^2)^\frac{n}{2}=1$ on $\partial B_1$ and the first derivative of $j(t)$, evaluated at $t=0$, is 
\begin{equation*}
        j'(0)=\frac{\omega_n^{n-1}}{(n+2)^{n-1}}\int_{\partial B_1}|\nabla u|^{n+1}V|_{t=0}\cdot \nu\,d\mathcal{H}^{n-1}=\frac{\omega_n^{n-1}}{(n+2)^{n-1}}\int_{\partial B_1}V|_{t=0}\cdot\nu\,d\mathcal{H}^{n-1}.
\end{equation*}

Furthermore the second derivative, at the time $t=0$, is given by
\begin{equation*}
    \begin{split}
        j''(0)&=\frac{n-1}{n}\omega_n^{n-2}\left(\int_{\partial B_1}u_t\,d\mathcal{H}^{n-1} \right)^2+\frac{\omega_n^{n-1}}{(n+2)^{n-1}}\\
        &\times\left\{\int_{\partial B_1}\left[u_t^2(\Delta u-H)+V_t|_{t=0}\cdot \nu-2u_t\frac{\partial u_t}{\partial \nu}\right]\,d\mathcal{H}^{n-1}+\int_{B_1}(S_n^{ij}(D^2u))_tu_{tij}u\,dx +n\int_{\partial B_1}(V|_{t=0}\cdot \nu)^2\,d\mathcal{H}^{n-1} \right\}.
    \end{split}
\end{equation*}
Now, recalling that $\Delta u=n$ on $\partial B_1$ and $H|_{\partial B_1}=n-1$, we have the thesis.
\end{proof}
Once we know how to write the second variation of the torsion functional, we want to write in terms of spherical harmonics the deficit $\delta T$. This is the content of the following theorem.
\begin{teorema}
    In the same assumptions of Theorem \ref{3.3} consider the torsion deficit $\delta T(t)=1-T(\Omega(t))/T(B_1)$. Then
    \begin{equation*}
        \delta T(t)=-\frac{t^2}{2}\frac{n+2}{\omega_n}\left[(n+1)\sum_{k\geq 0}a_k^2-2\sum_{k\geq 0}ka_k^2+\int_{\partial B_1}A(\xi)\,d\mathcal{H}_\xi^{n-1}+\frac{1}{n}\sum_{k\geq 0}a_k^2(k^2-k)\right]+o(t^2),
    \end{equation*}
    where 
    \begin{equation*}
        a_k=\int_{\partial B_1}V(\xi)Y_k(\xi)\,d\mathcal{H}^{n-1}_\xi,
    \end{equation*}
    and $\{Y_k(\xi)\}_{k\in\mathbb{N}}$ is a family of \emph{spherical harmonics} that satisfy
    \begin{equation*}
        -\Delta_\xi Y_k=k(k+n-2)Y_k, \quad \|Y_k\|_{L^2}=1.
    \end{equation*}
\end{teorema}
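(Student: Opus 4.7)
The plan is to Taylor-expand $\delta T(t) = 1 - j(t)/j(0)$ to second order at $t = 0$, where $j(t) = T(\Omega(t))$ and $j(0) = T(B_1) = \omega_n^n/(n+2)^n$, and substitute the shape derivatives computed in Theorem \ref{3.3}. Dividing the $t^2$-coefficient by $2 j(0)$ automatically produces the prefactor $\tfrac{n+2}{\omega_n}$ appearing in the statement.

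\textbf{Step 1: vanishing of the linear part.} The boundary condition in \eqref{omega}, together with $\nabla u|_{\partial B_1} = \nu$ (since $u(x) = (|x|^2 - 1)/2$), gives $u_t|_{\partial B_1} = -V(\xi)$, where $V(\xi)$ denotes the first-order radial perturbation. The first of the two constraints in \eqref{der sec} then reads $\int_{\partial B_1} V \, d\mathcal{H}^{n-1}_\xi = 0$, hence $\int_{\partial B_1} u_t \, d\mathcal{H}^{n-1} = 0$. Consequently both $j'(0)$ and the term $\tfrac{n-1}{n}\omega_n^{n-2}(\int u_t)^2$ appearing in $j''(0)$ vanish.

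\textbf{Step 2: boundary terms in spherical harmonics.} Expand $V = \sum_k a_k Y_k$. Since $u_t$ is harmonic in $B_1$ (the linearization of $\det D^2 u = 1$ at $D^2 u = I$ is precisely $\Delta u_t = 0$), the unique harmonic extension of $-V$ to $B_1$ is $u_t(x) = -\sum_k a_k |x|^k Y_k(x/|x|)$. Orthonormality of $\{Y_k\}$ and the homogeneity of $|x|^k Y_k$ then produce the identities
\[
\int_{\partial B_1} u_t^2 = \sum_k a_k^2, \quad \int_{\partial B_1} u_t \,\partial_\nu u_t = \sum_k k\, a_k^2, \quad \int_{\partial B_1}(\partial_\nu u_t)^2 = \sum_k k^2 a_k^2.
\]
On $\partial B_1$ at $t=0$, the velocity field satisfies $V|_{t=0}\cdot \nu = V(\xi)$, while differentiating $r(\xi,t)\xi$ twice in $t$ gives $V_t|_{t=0}\cdot \nu = A(\xi)$; hence $\int (V\cdot\nu)^2 = \sum_k a_k^2$ and $\int V_t\cdot\nu = \int_{\partial B_1} A \, d\mathcal{H}^{n-1}$. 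Collecting the boundary contributions to $j''(0)$ from Theorem \ref{3.3} yields $(n+1)\sum_k a_k^2 - 2\sum_k k a_k^2 + \int_{\partial B_1} A \, d\mathcal{H}^{n-1}$.

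\textbf{Step 3: nonlinearity term, main obstacle.} The remaining piece is $\mathrm{NT} = \int_{B_1}(S_n^{ij}(D^2 u))_t u_{tij}\, u \, dx$. Jacobi's formula for the derivative of the cofactor matrix, evaluated at $D^2 u(x,0) = I$, gives $(S_n^{ij})_t|_{t=0} = \Delta u_t \, \delta^{ij} - u_{t,ij}$, which reduces to $-u_{t,ij}$ by harmonicity of $u_t$. Since $u = (|x|^2 - 1)/2$ on $B_1$ one gets
\[
\mathrm{NT} = -\int_{B_1} |D^2 u_t|^2 u \, dx = \tfrac{1}{2}\int_{B_1}(1-|x|^2)\,|D^2 u_t|^2 \, dx.
\]
Integrating by parts, exploiting both $(1-|x|^2)|_{\partial B_1}=0$ and $\Delta u_t = 0$, one transfers the weight $1-|x|^2$ onto lower-order quantities and reduces $\mathrm{NT}$ to a linear combination of $\int u_t\,\partial_\nu u_t$ and $\int (\partial_\nu u_t)^2$, already computed in Step 2; in spherical harmonics this produces a multiple of $\sum_k a_k^2(k^2 - k)$. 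The delicate bookkeeping required to pin down the exact coefficient is where the nonlinearity of the Monge-Ampère operator enters and is the main technical obstacle; this is precisely the computation deferred to the appendix. Substituting back into $j''(0)$ and dividing by $2j(0)$ then yields the claimed spherical-harmonic expansion of $\delta T(t)$.
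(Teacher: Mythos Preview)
Your plan is correct and follows the paper's argument step for step: Taylor expand $\delta T(t)=1-j(t)/j(0)$, use the first constraint in \eqref{der sec} to kill both $j'(0)$ and the squared term in $j''(0)$, write $u_t=-\sum_k a_k r^k Y_k$ as the harmonic extension of $-V$ (the paper derives this by separation of variables, but it is the same object), and read off the boundary integrals in Theorem~\ref{3.3} to obtain $(n+1)\sum_k a_k^2-2\sum_k k\,a_k^2+\int_{\partial B_1}A$, with the nonlinear term handled by integration by parts as in Appendix~\ref{Appendix A}.

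One correction to your sketch of Step 3: if you integrate by parts on
\[
\mathrm{NT}=\tfrac12\int_{B_1}(1-|x|^2)\,|D^2u_t|^2\,dx,
\]
what falls out on the boundary is $\int_{\partial B_1}|\nabla u_t|^2\,d\mathcal{H}^{n-1}$, i.e.\ the \emph{full} gradient on the sphere, tangential part included --- not merely a combination of $\int_{\partial B_1} u_t\,\partial_\nu u_t$ and $\int_{\partial B_1}(\partial_\nu u_t)^2$. The tangential contribution $\int_{\partial B_1}|\nabla_\xi V|^2=\sum_k k(k+n-2)a_k^2$ is genuinely needed; this is exactly what the appendix extracts via the boundary integration-by-parts identity \eqref{intXpar}, which also brings in $\partial_r^2 v|_{r=1}$. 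So ``already computed in Step 2'' is not quite enough to close the bookkeeping.
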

\begin{proof}
We start by setting
\begin{equation*}
    V(x)=V|_{t=0}\cdot \nu(x),\quad A(x)=V_t|_{t=0}\cdot \nu(x).
\end{equation*}
We also introduce for $x\equiv (r,\xi)\in\Omega$
\begin{equation*}
    v(r,\xi)=u_t(x),
\end{equation*}
where $u_t$ is given by \eqref{omega}.
Now, from \eqref{der sec}, we have that
\begin{equation*}
    j'(0)=\frac{\omega_n^{n-1}}{(n+2)^{n-1}}\int_{\partial B_1}V(\xi)\,d\mathcal{H}_\xi^{n-1}=0.
\end{equation*}
This outcome is expected, as
$T$ reaches its maximum at $t=0$. With this notation, we also have
\begin{equation*}
    \begin{split}
        j''(0)&=\frac{n-1}{n}\omega_n^{n-2}\left(\int_{\partial B_1}V(\xi)\,d\mathcal{H}^{n-1}_\xi \right)^2+\frac{\omega_n^{n-1}}{(n+2)^{n-1}}\int_{\partial B_1}\left[(n+1)V^2(\xi)+A(\xi)+2V(\xi)\frac{\partial v}{\partial r}\bigg|_{r=1} \right]\,d\mathcal{H}^{n-1}_\xi\\
        &+\frac{\omega_n^{n-1}}{(n+2)^{n-1}}\int_{B_1}(S_n^{ij}(D^2u))_tu_{tij}u\,dx\\
        &=\frac{\omega_n^{n-1}}{(n+2)^{n-1}}\int_{\partial B_1}\left[(n+1)V^2(\xi)+A(\xi)+2V(\xi)\frac{\partial v}{\partial r}\bigg|_{r=1} \right]\,d\mathcal{H}^{n-1}_\xi+\frac{\omega_n^{n-1}}{(n+2)^{n-1}}\int_{B_1}(S_n^{ij}(D^2u))_tu_{tij}u\,dx.
    \end{split}
\end{equation*}

Now, for $t$ sufficiently small, $\partial\Omega(t)$ can be parametrized as
\begin{equation*}
    r(\xi,t)=1+V(\xi)t+\frac{A(\xi)}{2}t^2+o(t^2),\quad r\in\mathbb{R}^+,\quad \xi\in \partial B_1.
\end{equation*}

If we apply the Taylor expansion around $t=0$ we have
\begin{equation*}
    T(\Omega(t))=T(B_1)+t\underbrace{j'(0)}_{=0}+\frac{t^2}{2}j''(0)+o(t^2)
\end{equation*}
and then
\begin{equation*}
    \delta T(t)=1-\frac{T(\Omega(t))}{T(\Omega(0))}=-\frac{t^2}{2}\frac{j''(0)}{T(0)}+o(t^2).
\end{equation*}

Now we need an expression for $v(r,\xi)$. We observe that the equation for $u_t$ is $S_n^{ij}(D^2u)u_{tij}=0$ and, at $t=0$, since $S_n^{ij}(D^2u)=\delta_{ij}$, becomes $\Delta u_t=0$. Then in spherical coordinates we have
\begin{equation*}
    \begin{cases}
        -\frac{1}{r^{n-1}}\frac{\partial}{\partial r}\left(r^{n-1}\frac{\partial v}{\partial r}\right)-\frac{1}{r^2}\Delta_\xi v=0 & (r,\xi)\in(0,1)\times \partial B_1\\
        v(1,\xi)=-V(\xi) & \xi\in \partial B_1.
    \end{cases}
\end{equation*}
Now we want to write $V(\xi)$ in terms of a family of spherical harmonics $\{Y_k\}_{k\in\mathbb{N}}$, which satisfy for all $k\geq0$ 
\begin{equation}\label{propHarm}
    \begin{split}
        &V(\xi)=\sum_{k=0}^{+\infty} a_k Y_k(\xi),\quad \xi\in \partial B_1, \quad -\Delta_\xi Y_k=k(k+n-2)Y_k,\\
        &\|Y_k\|_{L^2}=1,\quad a_k=\int_{\partial B_1}V(\xi)Y_k(\xi)\,d\mathcal{H}^{n-1}_\xi,\quad a_0=0,\quad \|V\|_{L^2}^2=\sum_{k=0}^{+\infty} a_k^2.
    \end{split}
\end{equation}
Then, with the separation of variables
\begin{equation}\label{varsep}
    v(r,\xi)=\sum_{k=0}^{+\infty}R_k(r)Y_k(\xi),
\end{equation}
the equation becomes
\begin{equation*}
    -\frac{n-1}{r}\frac{\partial v}{\partial r}-\frac{\partial^2 v}{\partial r^2}-\frac{1}{r^2}\Delta_\xi v=0,
\end{equation*}
that is from \eqref{varsep}
\begin{equation*}
    \sum_{k=0}^{+\infty} \left[-\frac{n-1}{r}R'_k(r)Y_k(\xi)-R''_k(r)Y_k(\xi)-\frac{1}{r^2}R_k(r)\Delta_\xi Y_k(\xi) \right]=0.
\end{equation*}
Using \eqref{propHarm} we get
\begin{equation*}
    \sum_{k=0}^{+\infty} \left[-\frac{n-1}{r}R'_k(r)-R''_k(r)+\frac{k(k+n-2)}{r^2}R_k(r) \right]Y_k(\xi)=0,
\end{equation*}
but, since $\{Y_k\}_k$ is a basis of $L^2$, then we have that, for all $k\geq 0$,
\begin{equation*}
    \begin{cases}
        &R''_k(r)+\frac{n-1}{r}R'_k(r)-\frac{k(k+n-2)}{r^2}R_k(r)=0,\\
        &R_k(1)=-a_k,\\
        &R'_k(0)=0.
    \end{cases}
\end{equation*}
Explicitly solving the ODE, we arrive to
\begin{equation*}
    R_k(r)=c_1r^k+c_2r^{2-(n+k)},
\end{equation*}
with $c_1$, $c_2$ to be determined. Since we are interested in solutions that are $L^2$, we choose $c_2=0$ and then, from $R_k(1)=-a_k$ we find
\begin{equation*}
    R_k(r)=-a_kr^k.
\end{equation*}
Now we are interested in the radial derivative of $v(r,\xi)$, evaluated in $r=1$, i.e.,
\begin{equation*}
    \frac{\partial v}{\partial r}(r,\xi)\bigg|_{r=1}=\sum_{k=0}^{+\infty} R'_k(1)Y_k(\xi)=-\sum_{k=0}^{+\infty}ka_kY_k(\xi),
\end{equation*}
and, rewriting the torsion deficit, we have
\begin{equation*}
    \delta T(t)=-\frac{t^2}{2}\frac{n+2}{\omega_n}\left[\int_{\partial B_1}[(n+1)V^2(\xi)+A(\xi)-2V(\xi)\sum_{k\geq 0}ka_k Y_k(\xi)]\,d\mathcal{H}_\xi^{n-1}+\int_{B_1}(S_n^{ij}(D^2u))_tu_{tij}u\,dx\right]+o(t^2).
\end{equation*}
Recalling that
\begin{equation*}                   \|V\|_{L^2}^2=\sum_{k\geq 0}      a_k^2,\quad \int_{S^{n-     1}}V(\xi)Y_k(\xi)d\mathcal{H}_\xi^{n-1}=a_k,
\end{equation*}
and (see Appendix \ref{Appendix A})
\begin{equation}
    \label{AppendixA}
    \int_{B_1}(S_n^{ij}(D^2u))_tu_{tij}u\,dx=\frac{1}{n}\sum_{k\geq 0}a_k^2(k^2-k)
\end{equation}
we find
\begin{equation*}
    \delta T(t)=-\frac{t^2}{2}\frac{n+2}{\omega_n}\left[(n+1)\sum_{k\geq 0}a_k^2-2\sum_{k\geq 0}ka_k^2+\int_{\partial B_1}A(\xi)\,d\mathcal{H}_\xi^{n-1}+\frac{1}{n}\sum_{k\geq 0}a_k^2(k^2-k)\right]+o(t^2).
\end{equation*}
\end{proof}
Now we want to write, in terms of spherical harmonic functions, the \emph{Alexandrov-Fenchel deficit}.
\begin{lemma}
    The Alexandrov-Fenchel deficit is given by
    \begin{equation*}
        \delta_{AF}(t)=-\frac{t^2}{2}\frac{n+2}{\omega_n}\left[\int_{\partial B_1} A(\xi)\,d\mathcal{H}_\xi^{n-1}+(n-1)\sum_{k\ge 0}a^2_k \right].
    \end{equation*}
\end{lemma}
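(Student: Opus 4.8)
The plan is to reduce $\delta_{AF}(t)$ to a purely volumetric second-order expansion. Under the normalization $W_{n-1}(\Omega(t))=\omega_n$ adopted in the proof of Theorem~\ref{3.3}, and using $W_0=|\cdot|$, the defining ratio collapses: $\delta_{AF}(t)=1-\left(|\Omega(t)|/\omega_n\right)^{n+2}$. Thus it suffices to Taylor-expand $t\mapsto|\Omega(t)|$ to second order at $t=0$ and substitute.

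Since $\Omega(t)$ is star-shaped with radial function $r(\xi,t)=1+tV(\xi)+\tfrac{t^2}{2}A(\xi)+o(t^2)$, with the $o(t^2)$ uniform in $\xi$ by Definition~\ref{convergence}, I would use the elementary formula $|\Omega(t)|=\tfrac1n\int_{\partial B_1}r(\xi,t)^n\,d\mathcal{H}^{n-1}_\xi$ for the volume of a star-shaped set. Expanding the integrand, $r^n=1+ntV+\tfrac{t^2}{2}\bigl(nA+n(n-1)V^2\bigr)+o(t^2)$, and integrating over $\partial B_1$ gives $|\Omega(t)|=\omega_n+t\int_{\partial B_1}V\,d\mathcal{H}^{n-1}+\tfrac{t^2}{2}\bigl(\int_{\partial B_1}A\,d\mathcal{H}^{n-1}+(n-1)\int_{\partial B_1}V^2\,d\mathcal{H}^{n-1}\bigr)+o(t^2)$. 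The first-order term vanishes by the constraint $a_0=0$, equivalently $\int_{\partial B_1}V\,d\mathcal{H}^{n-1}=0$, which is the first identity in \eqref{der sec} coming from fixing $W_{n-1}$; and by Parseval's identity for the spherical harmonic expansion of $V$ one has $\int_{\partial B_1}V^2\,d\mathcal{H}^{n-1}=\|V\|_{L^2}^2=\sum_{k\ge0}a_k^2$.

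Finally I would write $|\Omega(t)|/\omega_n=1+\tfrac{t^2}{2}\beta+o(t^2)$ with $\beta=\tfrac1{\omega_n}\bigl(\int_{\partial B_1}A\,d\mathcal{H}^{n-1}+(n-1)\sum_{k\ge0}a_k^2\bigr)$, insert this into $\delta_{AF}(t)=1-(|\Omega(t)|/\omega_n)^{n+2}$, and use the binomial expansion $(1+\tfrac{t^2}{2}\beta+o(t^2))^{n+2}=1+(n+2)\tfrac{t^2}{2}\beta+o(t^2)$ to arrive at the stated formula (with an $o(t^2)$ remainder). There is no genuine obstacle here: every step is an elementary expansion. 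The only two points requiring care are ensuring the $o(t^2)$ remainder in the radial parametrization is uniform in $\xi$, so that it survives integration over $\partial B_1$, and invoking $\int_{\partial B_1}V\,d\mathcal{H}^{n-1}=0$ \emph{before} raising $|\Omega(t)|/\omega_n$ to the power $n+2$, so that no first-order contribution leaks into the second-order coefficient.
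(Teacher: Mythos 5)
Your proposal is correct and follows essentially the same route as the paper: normalize so $W_{n-1}(\Omega(t))=\omega_n$, expand $W_0(\Omega(t))$ to second order in $t$ via the radial parametrization, kill the first-order term using $\int_{\partial B_1}V=0$ from \eqref{der sec}, apply Parseval for $\int_{\partial B_1}V^2=\sum a_k^2$, and expand $(W_0/\omega_n)^{n+2}$ to first order in $t^2$. The only superficial difference is that you invoke the explicit star-shaped volume formula $|\Omega(t)|=\tfrac1n\int_{\partial B_1}r(\xi,t)^n\,d\mathcal{H}^{n-1}_\xi$ directly, whereas the paper deduces the expansion of $W_0$ by analogy with the $W_{n-1}$ expansion already derived; both yield the same second-order coefficient.
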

\begin{proof}
Since we are fixing the $(n-1)$-th quermassintegral, we can expand the volume inside the deficit and impose the conditions \eqref{der sec}
\begin{equation}\label{espDef}
    \begin{split}
        \delta_{AF}(t)&=1-\omega_n^{(n-1)(n+2)}\frac{W_0^{n+2}(\Omega(t))}{W_{n-1}^{n(n+2)}(\Omega(t))}\\
        &=1-\omega_n^{(n-1)(n+2)}\frac{W_0^{n+2}(\Omega(0))+t\frac{d}{dt}W_0^{n+2}(\Omega(t))|_{t=0}+\frac{t^2}{2}\frac{d^2}{dt^2}W_0^{n+2}(\Omega(t))|_{t=0}}{\omega_n^{n(n+2)}}+o(t^2).
    \end{split}
\end{equation}
In the same spirit of \eqref{espansione W_{n-1}}, we can state that
\begin{equation*}
    W_0(\Omega(t))=\omega_n+t\int_{\partial B_1}V(\xi)\,d\mathcal{H}_\xi^{n-1}+\frac{t^2}{2}\int_{\partial B_1}[A(\xi)+(n-1)V^2(\xi)]\,d\mathcal{H}_\xi^{n-1}+o(t^2),
\end{equation*}
where we observe that the first order term is zero. Then we have
\begin{equation*}
    \begin{split}
        W_0^{n+2}(\Omega(t))|_{t=0}&=\omega_n^{n+2},\\
        \frac{d}{dt}W_0^{n+2}(\Omega(t))|_{t=0}&=(n+2)W_0^{n+1}(\Omega(0))\frac{d}{dt}W_0(\Omega(t))|_{t=0}=0,\\
        \frac{d^2}{dt^2}W_0^{n+2}(\Omega(t))|_{t=0}&=(n+2)(n+1)W_0^n(\Omega(0))\left(\frac{d}{dt}W_0(\Omega(t))|_{t=0}\right)^2+(n+2)W_0^{n+1}(\Omega(0))\frac{d^2}{dt^2}W_0(\Omega(t))|_{t=0}\\
        &=(n+2)\omega_n^{n+1}\int_{\partial B_1}[A(\xi)+(n-1)V^2(\xi)]\,d\mathcal{H}_\xi^{n-1}.
    \end{split} 
\end{equation*}
Finally, from \eqref{espDef}, we can rewrite the deficit as
\begin{equation*}
    \begin{split}
        \delta_{AF}(t)&=-\frac{t^2}{2}\frac{n+2}{\omega_n}\int_{\partial B_1}[A(\xi)+(n-1)V^2(\xi)]\,d\mathcal{H}_\xi^{n-1}+o(t^2)\\
        &=-\frac{t^2}{2}\frac{n+2}{\omega_n}\left[\int_{\partial B_1} A(\xi)\,d\mathcal{H}_\xi^{n-1}+(n-1)\sum_{k\ge 0}a^2_k \right] .
    \end{split}
\end{equation*}
\end{proof}
From \eqref{der sec} we can rewrite the acceleration term as follows
\begin{equation}\label{newacc}
    \int_{\partial B_1}A(\xi)\,d\mathcal{H}_\xi^{n-1}=\frac{n^2-6n+7}{n-1}\int_{\partial B_1}|\nabla_\xi V|^2\,d\mathcal{H}^{n-1}_\xi-\frac{2(n-3)}{n-1}\int_{\partial B_1}S_2(D^2V)\,d\mathcal{H}^{n-1}_\xi.
\end{equation}
Our aim is to write the previous integral in terms of spherical harmonics.
\begin{lemma}
    The acceleration term is given by   \begin{equation}\label{accellerazione}
        \int_{\partial B_1}A(\xi)\,d\mathcal{H}_{\xi}^{n-1}=-\sum_{k\geq 0}a_k^2k(k+n-2).
    \end{equation}
\end{lemma}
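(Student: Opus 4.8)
The plan is to reduce the statement, via the already-established identity \eqref{newacc}, to the evaluation of the two spherical integrals $\int_{\partial B_1}|\nabla_\xi V|^2\,d\mathcal{H}_\xi^{n-1}$ and $\int_{\partial B_1}S_2(D^2V)\,d\mathcal{H}_\xi^{n-1}$ in terms of the coefficients $a_k$. The first one is immediate: expanding $V=\sum_{k\geq0}a_kY_k$, using $-\Delta_\xi Y_k=k(k+n-2)Y_k$, the orthonormality $\|Y_k\|_{L^2}=1$, and the Green--Beltrami identity on the closed manifold $\partial B_1$, one gets
\begin{equation*}
    \int_{\partial B_1}|\nabla_\xi V|^2\,d\mathcal{H}_\xi^{n-1}=-\int_{\partial B_1}V\,\Delta_\xi V\,d\mathcal{H}_\xi^{n-1}=\sum_{k\geq0}a_k^2\,k(k+n-2).
\end{equation*}

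For the second integral I would use the algebraic identity $S_2(M)=\tfrac12\big((\operatorname{tr}M)^2-\operatorname{tr}(M^2)\big)$ applied to the covariant Hessian $D^2V$ on $\partial B_1$, so that $S_2(D^2V)=\tfrac12\big((\Delta_\xi V)^2-|D^2V|^2\big)$, where $|D^2V|^2$ denotes the Hilbert--Schmidt norm of the Hessian tensor. The crucial ingredient is the Bochner formula on $\partial B_1$, which has constant Ricci curvature $\operatorname{Ric}=(n-2)g$:
\begin{equation*}
    \tfrac12\Delta_\xi|\nabla_\xi V|^2=|D^2V|^2+\nabla_\xi V\cdot\nabla_\xi(\Delta_\xi V)+(n-2)|\nabla_\xi V|^2.
\end{equation*}
Integrating over $\partial B_1$ kills the left-hand side, and an integration by parts in the middle term gives $\int\nabla_\xi V\cdot\nabla_\xi(\Delta_\xi V)=-\int(\Delta_\xi V)^2$, whence $\int|D^2V|^2=\int(\Delta_\xi V)^2-(n-2)\int|\nabla_\xi V|^2$. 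Substituting into the expression for $S_2(D^2V)$ yields
\begin{equation*}
    \int_{\partial B_1}S_2(D^2V)\,d\mathcal{H}_\xi^{n-1}=\frac{n-2}{2}\int_{\partial B_1}|\nabla_\xi V|^2\,d\mathcal{H}_\xi^{n-1}.
\end{equation*}

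Finally I would plug both evaluations into \eqref{newacc}: the coefficient multiplying $\int_{\partial B_1}|\nabla_\xi V|^2$ collapses, since $\tfrac1{n-1}\big[(n^2-6n+7)-(n-3)(n-2)\big]=\tfrac{-(n-1)}{n-1}=-1$, and therefore $\int_{\partial B_1}A\,d\mathcal{H}_\xi^{n-1}=-\int_{\partial B_1}|\nabla_\xi V|^2\,d\mathcal{H}_\xi^{n-1}=-\sum_{k\geq0}a_k^2k(k+n-2)$, as claimed. I expect the Bochner step to be the only real obstacle: one must be careful about the sign and the curvature constant of the round sphere, and check that the identity degenerates consistently in low dimension (for $n=2$ one has $S_2(D^2V)\equiv0$ and the curvature term is absent; for $n=3$ the $S_2$ contribution in \eqref{newacc} disappears altogether), after which the conclusion follows.
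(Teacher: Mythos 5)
Your computation is correct and arrives at exactly the same intermediate identity
\begin{equation*}
\int_{\partial B_1}S_2(D^2V)\,d\mathcal{H}^{n-1}_\xi=\frac{n-2}{2}\int_{\partial B_1}|\nabla_\xi V|^2\,d\mathcal{H}^{n-1}_\xi,
\end{equation*}
and then the coefficient cancellation $\tfrac{1}{n-1}\bigl[(n^2-6n+7)-(n-3)(n-2)\bigr]=-1$ as in the paper. The treatment of $\int|\nabla_\xi V|^2$ via the spherical-harmonic expansion is identical. Where you differ is in how the $S_2(D^2V)$ integral is reduced to $\int|\nabla_\xi V|^2$: the paper invokes a specialized identity (formula 92 of \cite{VanBlargan2022QuantitativeQI}), namely
$\int_{\partial B_1}S_k(D^2V)=\frac{n-k}{k}\int_{\partial B_1}V^iV_j[T_{k-2}]_i^j(D^2V)$, which for $k=2$ collapses to the same expression because $[T_0]_i^j=\delta_i^j$. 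You instead combine the algebraic identity $S_2(D^2V)=\tfrac12\bigl((\Delta_\xi V)^2-|D^2V|^2\bigr)$ with the Bochner formula on $S^{n-1}$, whose Ricci tensor is $(n-2)g$, to get $\int|D^2V|^2=\int(\Delta_\xi V)^2-(n-2)\int|\nabla_\xi V|^2$. Both routes are valid; your version is more self-contained and appeals only to classical Riemannian geometry rather than to a formula imported from the cited reference, at the mild cost of invoking the Bochner identity and the curvature of the round sphere (which you handle correctly, including the sanity checks for $n=2,3$). It would be worth stating explicitly in a write-up that $D^2V$ denotes the covariant Hessian on $\partial B_1$, so that $\operatorname{tr}D^2V=\Delta_\xi V$ and $\operatorname{tr}\bigl((D^2V)^2\bigr)=|D^2V|^2$ are the intrinsic quantities required by the Bochner formula.
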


\begin{proof}
Let's start by recalling that 
\begin{equation*}
    \int_{\partial B_1}Y_i(\xi)Y_j(\xi)\,d\mathcal{H}_\xi^{n-1}=0,\quad \text{ if } i\not=j,
\end{equation*}
we find
\begin{equation*}
    \begin{split}
        \int_{\partial B_1}|\nabla_\xi V|^2\,d\mathcal{H}_\xi^{n-1}&=-\int_{\partial B_1}V\Delta_\xi V\,d\mathcal{H}^{n-1}_\xi=-\int_{\partial B_1}\left(\sum_{k\geq 0}a_kY_k \right)\left(-\sum_{k\geq 0}a_k k(k+n-2)Y_k \right)\,d\mathcal{H}_\xi^{n-1}\\
        &=\sum_{k\geq 0}a_k^2 k(k+n-2)\int_{\partial B_1}Y_k^2\,d\mathcal{H}_\xi^{n-1}=\sum_{k\geq 0}a_k^2 k(k+n-2).
    \end{split}
\end{equation*}
For $S_2(D^2V)$, we recall the following equation (see \cite{VanBlargan2022QuantitativeQI}, formula 92) that holds true for $t$ sufficiently small and for every function that lives on the sphere
\begin{equation}\label{nueva}
    \int_{\partial B_1}S_k(D^2V)\,d\mathcal{H}^{n-1}=\frac{n-k}{k}\int_{\partial B_1}V^iV_j[T_{k-2}]_i^j(D^2V)\,d\mathcal{H}^{n-1},
\end{equation}
for $k\ge 2$.
From \eqref{nueva} we have
\begin{equation*}
    \int_{\partial B_1} S_2(D^2V)\,d\mathcal{H}^{n-1}=\frac{n-2}{2}\int_{\partial B_1}V^iV_j\delta_i^j\,d\mathcal{H}^{n-1}=\frac{n-2}{2}\int_{\partial B_1} |\nabla_\xi V|^2\,d\mathcal{H}^{n-1}=\frac{n-2}{2}\sum_{k\geq 0}a_k^2 k(k+n-2).
\end{equation*}
Then, from \eqref{newacc}, we get
\begin{equation*}
    \int_{\partial B_1}A(\xi)\,d\mathcal{H}_{\xi}^{n-1}=-\sum_{k\geq 0}a_k^2k(k+n-2).
\end{equation*}
\end{proof}
\section{Proof of Theorem \ref{MainTheo}}\label{Sezione 4}
Now we are able to prove the main theorem. Thanks to \eqref{accellerazione} we can rewrite the ratio $\delta T/\delta_{AF}$ as follows
\begin{equation*}
    \frac{\delta T(t)}{\delta_{AF}(t)}=\frac{(n+1)\sum_{k\ge 0}a^2_k-2\sum_{k\ge 0}ka^2_k -\sum_{k\geq 0}a_k^2k(k+n-2)+\frac{1}{n}\sum_{k\geq 0}a_k^2(k^2-k)}{(n-1)\sum_{k\ge 0}a_k^2-\sum_{k\geq 0}a_k^2k(k+n-2)}+o(1).
\end{equation*}

Then, since $a_0=0$, we have
\begin{equation}\label{liminf}
    \liminf_{t\rightarrow 0} \frac{\delta T(t)}{\delta_{AF}(t)}\geq \inf_{k\geq 2}\left\{\frac{n+1-2k-k(k+n-2)+\frac{k^2-k}{n}}{n-1-k(k+n-2)}\right\}=:\inf_{k\geq 2} f(k),
\end{equation}
where $a_1$ provides no contribution, indeed $k=1$ corresponds to a translation of $\Omega(0)$.

Now we want to show that the previous infimum is attained as $k\rightarrow+\infty$. In order to do this we see $f$ as the restriction to $\mathbb{N}_{2}$ of the following function
\begin{equation}\label{f tilde}
    \tilde f:x\in\mathbb{R}_{2}\mapsto \frac{n+1-2x-x(x+n-2)+ \frac{1}{n}(x^2-x)}{n-1-x(x+n-2)},
\end{equation}
where with $\mathbb{N}_2$ ($\mathbb{R}_2$) we denote the set of natural (real) numbers greater or equal then $2$. 
The strategy of the proof is the following. We will show first that if $\overline{x}\geq 2$ is a critical point for $\tilde f$, then it must be a \emph{local maximum}. From this it will be clear that 
\begin{equation}\label{infimo}
    \inf_{x\geq 2} \tilde f(x)=\min\{\tilde f(2), \, \lim_{x\rightarrow +\infty }\tilde f(x) \}.
\end{equation}
Once we know that \eqref{infimo} holds, we show that this minimum is the limit of $\tilde f$ as $x\rightarrow+\infty$. These are the contents of the following lemmas.
\begin{lemma}
    Let $\overline{x}\geq 2$. If $\tilde f'(\overline{x})=0$, then $\tilde f''(\overline{x})<0$, i.e., $\overline{x}$ is a local maximum for $\tilde f$.
\end{lemma}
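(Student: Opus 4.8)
The plan is to reduce the claimed implication to an elementary computation with the two quadratic polynomials appearing in \eqref{f tilde}. Write $\tilde f = p/q$, where
\[
p(x) = n+1 - 2x - x(x+n-2) + \tfrac1n(x^2-x), \qquad q(x) = n-1 - x(x+n-2).
\]
First I would record that $q(x) = -(x-1)(x+n-1)$, so that $q$ is strictly negative and bounded away from $0$ on $\mathbb{R}_2$; in particular $\tilde f$ is smooth there and $\tilde f',\tilde f''$ are given by the usual quotient formulas. The one algebraic fact that makes everything transparent, which I would check by a direct expansion, is the identity
\[
p(x) - \tfrac{n-1}{n}\,q(x) = \tfrac{3n-1}{n}\,(1-x),
\]
equivalently $\tilde f(x) = \frac{n-1}{n} + \frac{3n-1}{n(x+n-1)}$.

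To prove the lemma as stated, suppose $\tilde f'(\overline x)=0$ for some $\overline x \ge 2$. Since $\tilde f' = (p'q-pq')/q^2$ and $q(\overline x)\ne 0$, the hypothesis means $p'(\overline x)q(\overline x) = p(\overline x)q'(\overline x)$. Substituting this into the general expression
\[
\tilde f'' = \frac{(p''q - pq'')\,q - 2q'(p'q - pq')}{q^3},
\]
the contribution carrying the factor $p'q - pq'$ vanishes at $\overline x$, leaving
\[
\tilde f''(\overline x) = \frac{p''(\overline x)\,q(\overline x) - p(\overline x)\,q''(\overline x)}{q(\overline x)^2}.
\]
Now $p''(x)\equiv -\tfrac{2(n-1)}{n}$ and $q''(x)\equiv -2$ are constants, so the numerator equals $2\bigl(p(\overline x) - \tfrac{n-1}{n}q(\overline x)\bigr)$, which by the identity above is $\tfrac{2(3n-1)}{n}(1-\overline x)$. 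For $n\ge 2$ and $\overline x \ge 2$ this is strictly negative, while $q(\overline x)^2 > 0$; hence $\tilde f''(\overline x) < 0$, as required.

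I expect no real obstacle: the whole argument hinges on the single identity $p - \tfrac{n-1}{n}q = \tfrac{3n-1}{n}(1-x)$, and the only points needing a word of care are the non-vanishing of $q$ on $\mathbb{R}_2$ (so that $\tilde f\in C^\infty(\mathbb{R}_2)$) and the cancellation of the $p'q-pq'$ term in $\tilde f''$ at a critical point. As a byproduct the same identity gives $\tilde f'(x) = -\frac{3n-1}{n(x+n-1)^2} < 0$ on all of $\mathbb{R}_2$, so in fact $\tilde f$ has no critical point there at all and is strictly decreasing; the lemma is therefore vacuously true, and — more usefully — this monotonicity is exactly what drives \eqref{infimo} and the identification of the infimum with $\lim_{x\to+\infty}\tilde f(x)=\frac{n-1}{n}$ in the lemmas that follow.
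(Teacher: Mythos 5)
Your proof is correct, and it follows the same basic route as the paper (reduce $\tilde f''$ at a critical point to $p''q - pq''$ over $q^2$, after the $p'q - pq'$ terms drop out), but your bookkeeping is cleaner and you extract a sharper conclusion. The key identity $p(x) - \tfrac{n-1}{n}q(x) = \tfrac{3n-1}{n}(1-x)$, equivalently
\[
\tilde f(x) = \frac{n-1}{n} + \frac{3n-1}{n(x+n-1)},
\]
is exactly right (one checks $q(x) = -(x-1)(x+n-1)$ and then the partial-fraction form follows), and it gives $p''q - pq'' = 2\bigl(p - \tfrac{n-1}{n}q\bigr) = \tfrac{2(3n-1)}{n}(1-\overline x) < 0$ for $\overline x \ge 2$, so the lemma holds. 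The paper's intermediate step states $N''D - ND'' = \tfrac{n-1}{n}D - N$ and arrives at the threshold $\overline x > \tfrac{n-1}{3n-1}$; a direct expansion gives instead $N''D - ND'' = 2\bigl(N - \tfrac{n-1}{n}D\bigr)$ and the threshold $\overline x > 1$, so there appears to be a sign/factor slip in the paper's computation, though it does not affect the validity of the lemma on $\mathbb{R}_2$. Your byproduct observation is the most valuable part: from the closed form one reads off $\tilde f'(x) = -\tfrac{3n-1}{n(x+n-1)^2} < 0$ on all of $\mathbb{R}_2$, so $\tilde f$ is strictly decreasing and has \emph{no} critical points at all — the lemma is vacuously true, and both \eqref{infimo} and the identification $\inf_{x\ge 2}\tilde f = \lim_{x\to+\infty}\tilde f = \tfrac{n-1}{n}$ in the following lemma become immediate, bypassing the comparison of $\tilde f(2)$ with the limit. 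This is a genuine simplification of the surrounding argument, not just of the lemma in isolation.
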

\begin{proof}
    We call $N(x)$, $D(x)$ the numerator and denominator, respectively, in \eqref{f tilde}. Let $\overline{x}$ be a critical point of $\tilde f$, that is
    \begin{equation*}
        \tilde f'(\overline{x})=0\quad\text{if and only if} \quad N'(\overline{x})D(\overline{x})-N(\overline{x})D'(\overline{x})=0.
    \end{equation*}
    Then 
    \begin{equation*}
        \tilde f''(\overline{x})<0\quad\text{if and only if} \quad N''(\overline{x})D(\overline{x})-N(\overline{x})D''(\overline{x})<0.
    \end{equation*}
   We have 
    \begin{equation*}
        N''(\overline{x})D(\overline{x})-N(\overline{x})D''(\overline{x})=\frac{n-1}{n}D(\overline{x})-N(\overline{x}).
    \end{equation*}
    After straightforward computation we find
    \begin{equation}\label{condDerSec}
        \tilde f''(\overline{x})<0\quad\text{if and only if} \quad \overline{x}>\frac{n-1}{3n-1}.
    \end{equation}
    Since $\frac{n-1}{3n-1}<1$ and $\overline{x}\geq2$, the claim follows.
\end{proof}
\begin{lemma}\label{lemmainf}
    Let $\tilde f$ be defined as in \eqref{f tilde}, then
    \begin{equation*}
        \inf_{x\geq2}\tilde f(x)=\lim_{x\rightarrow+\infty}\tilde f(x).
    \end{equation*}
\end{lemma}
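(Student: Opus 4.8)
The plan is to combine the previous lemma with an explicit comparison of the only two candidate values for the infimum. Having shown that every critical point $\overline x\ge 2$ of $\tilde f$ is a local maximum, $\tilde f$ cannot possess an interior local minimum on $[2,\infty)$: between two local maxima a local minimum would necessarily occur, so $\tilde f$ has at most one critical point, and there it attains a local maximum. Hence $\tilde f$ is monotone on each side of that point (and monotone on all of $[2,\infty)$ if no critical point exists), which is exactly the content of \eqref{infimo},
\begin{equation*}
    \inf_{x\ge 2}\tilde f(x)=\min\left\{\tilde f(2),\ \lim_{x\to+\infty}\tilde f(x)\right\}.
\end{equation*}
It then suffices to evaluate these two quantities and to check that the minimum is the limit.

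First I would compute the limit. Denoting by $N(x)$ and $D(x)$ the numerator and denominator in \eqref{f tilde} and expanding $x(x+n-2)=x^2+(n-2)x$, one gets
\begin{equation*}
    N(x)=\frac{1-n}{n}\,x^2-\left(n+\frac1n\right)x+(n+1),\qquad D(x)=-x^2-(n-2)x+(n-1)=-(x-1)(x+n-1).
\end{equation*}
Since $D(x)<0$ for every $x\ge 2$, the function $\tilde f$ is smooth and finite on $[2,\infty)$, so the monotonicity argument above is legitimate; moreover $\lim_{x\to+\infty}\tilde f(x)$ equals the ratio of the leading coefficients, namely $(n-1)/n=c_n$.

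Next I would evaluate $\tilde f(2)$. Substituting $x=2$ gives $N(2)=-(n^2+3n-2)/n$ and $D(2)=-(n+1)$, hence $\tilde f(2)=\dfrac{n^2+3n-2}{n(n+1)}$. The inequality $\tilde f(2)\ge (n-1)/n$ is then equivalent, after clearing the positive factors $n$ and $n+1$, to $n^2+3n-2\ge n^2-1$, i.e. to $3n\ge 1$, which holds for all $n\ge 2$. Therefore the minimum in \eqref{infimo} is $\lim_{x\to+\infty}\tilde f(x)$, which is the claim.

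The only genuinely delicate point is the passage from ``every critical point is a local maximum'' to the two-point formula \eqref{infimo}: it rests on the elementary fact that a smooth function on a half-interval with no interior local minimum is monotone on each side of its (unique, if any) critical point. Everything else reduces to manipulations with quadratics, the one thing worth tracking being the sign of $D$ on $[2,\infty)$ so that the final comparison is carried out with correctly oriented inequalities.
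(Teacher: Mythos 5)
Your proof is correct and follows essentially the same approach as the paper's: invoke the previous lemma to reduce to comparing $\tilde f(2)$ with the limit at infinity, then verify by direct computation that the limit $(n-1)/n$ is the smaller of the two. You simply supply more explicit justification for the two-point formula \eqref{infimo} and for the sign of $D$ on $[2,\infty)$; the structure and the final inequality (equivalent to $3n-1>0$) match the paper's proof.
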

\begin{proof}
    It follows from the previous lemma that
    \begin{equation*}
        \inf_{x\geq 2} \tilde f(x)=\min\{\tilde f(2), \, \lim_{x\rightarrow +\infty }\tilde f(x) \}.
    \end{equation*}
    It remains to show that
    \begin{equation*}
        \frac{n^2+3n-2}{n(n+1)} =\tilde f(2)>\lim_{x\rightarrow +\infty }\tilde f(x)=\frac{n-1}{n}.
    \end{equation*}
    The previous inequality  is true if and only if
    \begin{equation*}
        3n-1>0,
    \end{equation*}
    which is true for all $n\geq 2$.
\end{proof}
Now we are able to prove Theorem \ref{MainTheo}.
\begin{proof}[proof of Theorem \ref{MainTheo} ]
    From \eqref{liminf} and Lemma \ref{lemmainf} follows that
    \begin{equation*}
        \liminf_{t\rightarrow 0} \frac{\delta T(t)}{\delta_{AF}(t)}\geq \inf_{k\geq 2} f(k)=\frac{n-1}{n}.
    \end{equation*}
    For the upper bound, let $\Omega$ be an open, bounded and convex set. From \eqref{min tor}-\eqref{max tor}, we find 
\begin{equation*}
    \begin{split}
        T(\Omega^*_{n-1})-T(\Omega)&\leq \frac{\omega_n^n\zeta_{n-1}^{n(n+2)}(\Omega)}{(n+2)^n}-\frac{|\Omega|^{n+2}}{(n+2)^n\omega_n^2}=\frac{W_{n-1}^{n(n+2)}(\Omega)}{(n+2)^n\omega_n^{n(n+1)}}-\frac{W_0^{n+2}(\Omega)}{(n+2)^n\omega_n^2}\\
        &=\frac{1}{(n+2)^n\omega_n^{n(n+1)}}\bigg[W_{n-1}^{n(n+2)}(\Omega)-\omega_n^{(n-1)(n+2)}W_0^{n+2}(\Omega) \bigg].
    \end{split}
\end{equation*}
After normalization, we have
\begin{equation*}
    \delta T=\frac{T(\Omega^*_{n-1})-T(\Omega)}{T(\Omega^*_{n-1})}\leq 1-\omega_n^{(n-1)(n+2)}\frac{W_0^{n+2}(\Omega)}{W_{n-1}^{n(n+2)}(\Omega)}=\delta_{AF}. 
\end{equation*}
\end{proof}
\appendix
\section[Proof of]{Proof of \eqref{AppendixA}}
\label{Appendix A}
We start by recalling the identity \eqref{cofproof}
\begin{equation*}
    S_k^{ij}(A)=\frac{1}{k}[T_{k-1}]_i^j(A),
\end{equation*}
that in our case is
\begin{equation*}
    S_n^{ij}(D^2u)=\frac{1}{n}[T_{n-1}]_i^j(D^2u)=\frac{1}{n(n-1)!}\delta_{jj_1\dots j_{n-1}}^{ii_1\dots i_{n-1}}u_{i_1j_1}\dots u_{i_{n-1}j_{n-1}}.
\end{equation*}
Then
\begin{equation*}
    \begin{split}
        (S_n^{ij}(D^2u))_tu_{ijt}&=\frac{1}{n}\left([T_{n-1}]_i^j(D^2u)\right)_tu_{ijt}=\frac{1}{n(n-1)!}\delta_{jj_1\dots j_{n-1}}^{ii_1\dots i_{n-1}}(u_{i_1j_1}\dots u_{i_{n-1}j_{n-1}})_tu_{ijt}\\
        &=\frac{n-1}{n(n-1)!}\delta_{jj_1\dots j_{n-1}}^{ii_1\dots i_{n-1}}(u_{i_1j_1})_tu_{i_{2}j_{2}}\dots u_{i_{n-1}j_{n-1}}u_{ijt}.
    \end{split}
\end{equation*}
We remark that on the ball $\Omega(0)=B_1$ the solution $u$ to \eqref{probtor} satisfies $u_{ij}=\delta_{ij}$. Then, on $B_1$, iterating \eqref{contr2}, we have
\begin{equation*}
    (S_n^{ij}(D^2u))_tu_{ijt}=\frac{1}{n}\delta_{ii_1}^{jj_1}(u_{i_1j_1})_t(u_{ij})_t=\frac{1}{n}\left[(\Delta u_t)^2-\sum_{ij}u_{ijt}^2 \right].
\end{equation*}
Since $u_t$ is harmonic in $B_1$, applying the divergence Theorem, we find
\begin{equation*}
    \int_{B_1}(S_n^{ij}(D^2u))_tu_{tij}u\,dx=-\frac{1}{n}\sum_{i,j}\int_{B_1}u^2_{tij}u\,dx=\frac{1}{n}\int_{B_1}u_{tij}u_ju_{ti}\,dx=\frac{1}{2n}\int_{B_1} (u_{ti}^2)_j u_j\,dx.
\end{equation*}
Now applying again the divergence Theorem, since on $\partial B_1$ it holds $|\nabla u|=1$ and $\Delta u=n$, 
\begin{equation}\label{Snesp}
    \int_{B_1}(S_n^{ij}(D^2u))_tu_{tij}u\,dx=\frac{1}{2n}\sum_i\int_{\partial B_1}u_{ti}^2\,d\mathcal{H}^{n-1}-\frac{1}{2}\sum_i\int_{B_1}u^2_{ti}\,dx.
\end{equation}
The second term in the last equality, integrating by parts, is
\begin{equation}\label{secter}
    \sum_i\int_{B_1}u_{ti}^2\,dx=\int_{\partial B_1}u_t\frac{\partial u_t}{\partial\nu}\,d\mathcal{H}^{n-1}=-\int_{\partial B_1}\frac{\partial u_t}{\partial\nu}V\cdot\nu\,d\mathcal{H}^{n-1}=-\int_{\partial B_1}\frac{\partial v}{\partial r}(r,\xi)\bigg|_{r=1}V(\xi)\,d\mathcal{H}^{n-1}_\xi=\sum_{k\geq 0}a_k^2k,
\end{equation}
where, again, we have used $\Delta u_t=0$.
For the first term in \eqref{Snesp} we recall the following integration by parts formula (see \cite[Theorem 5.4.13]{HP})
\begin{equation}\label{intXpar}
    \int_{\partial\Omega}\nabla f\cdot \nabla g\,d\mathcal{H}^{n-1}=-\int_{\partial\Omega}f\Delta g\,d\mathcal{H}^{n-1}+\int_{\partial\Omega}\left(\frac{\partial f}{\partial\nu} \frac{\partial g}{\partial\nu}+f\frac{\partial^2 g}{\partial\nu^2}+Hf\frac{\partial g}{\partial\nu}\right)\,d\mathcal{H}^{n-1}.
\end{equation}
Then, from \eqref{intXpar}, recalling that $u_t$ is harmonic, we have
\begin{equation}\label{priter}
    \begin{split}
         \int_{\partial B_1}\nabla & u_t\cdot \nabla u_t\,d\mathcal{H}^{n-1}=\int_{\partial B_1}\left(\frac{\partial u_t}{\partial\nu}\right)^2\,d\mathcal{H}^{n-1}+\int_{\partial B_1}u_t\frac{\partial^2 u_t}{\partial\nu^2}\,d\mathcal{H}^{n-1}+\int_{\partial B_1}Hu_t\frac{\partial u_t}{\partial\nu}\,d\mathcal{H}^{n-1}\\
         &=\int_{\partial B_1}\left(\frac{\partial v}{\partial r}(r,\xi)\bigg|_{r=1}\right)^2\,d\mathcal{H}^{n-1}_\xi-\int_{\partial B_1}V\cdot\nu \frac{\partial^2 v}{\partial r^2}(r,\xi)\bigg|_{r=1}\,d\mathcal{H}^{n-1}_\xi-(n-1)\int_{\partial B_1}V\cdot\nu \frac{\partial v}{\partial r}(r,\xi)\bigg|_{r=1}\,d\mathcal{H}^{n-1}_\xi\\
         &=\sum_{k\geq0}a_k^2[k^2+k(k-1)+k(n-1)].
    \end{split}
\end{equation}
then, from \eqref{Snesp}- \eqref{secter}-\eqref{priter}, we have
\begin{equation*}
    \int_{B_1}(S_n^{ij}(D^2u))_tu_{tij}u=\frac{1}{n}\sum_{k\geq 0}a_k^2(k^2-k),
\end{equation*}
that is \eqref{AppendixA}.

\subsection*{Acknowledgements}
We would like to thank Dr. Paolo Acampora and Dr. Emanuele Cristoforoni for their valuable advices that helped us to achieve these results.
\subsection*{Declarations}
\subsection*{Funding}
The authors were partially supported by Gruppo Nazionale per l’Analisi Matematica, la Probabilità e le loro Applicazioni
(GNAMPA) of Istituto Nazionale di Alta Matematica (INdAM).   
 \subsection*{Conflict of interst} We declare that we have no financial and personal relationship with other people or organizations.

\bibliographystyle{plain}
\bibliography{biblio}

@article{VanBlargan2022QuantitativeQI,
  title={Quantitative Quermassintegral Inequalities for Nearly Spherical Sets},
  author={C. VanBlargan and Y. Wang},
  journal={Communications in Contemporary Mathematics},
  year={2022}
}

@article{Nitsch2012AnIR,
  title={An isoperimetric result for the fundamental frequency via domain derivative},
  author={C. Nitsch},
  journal={Calculus of Variations and Partial Differential Equations},
  year={2012},
  volume={49},
  pages={323-335}
}

@book {AH,
    AUTHOR = {Atkinson, K. and Han, W.},
     TITLE = {Spherical harmonics and approximations on the unit sphere: an
              introduction},
    SERIES = {Lecture Notes in Mathematics},
    VOLUME = {2044},
 PUBLISHER = {Springer, Heidelberg},
      YEAR = {2012},
     PAGES = {x+244},
      ISBN = {978-3-642-25982-1},
   MRCLASS = {41-02 (33C55 41A30 41A63 42A10)},
  MRNUMBER = {2934227},
MRREVIEWER = {Feng\ Dai},
       DOI = {10.1007/978-3-642-25983-8},
       URL = {https://doi.org/10.1007/978-3-642-25983-8},
}

@article {MS,
    AUTHOR = {Masiello, A. L. and Salerno, F.},
     TITLE = {A quantitative result for the {$k$}-{H}essian equation},
   JOURNAL = {Nonlinear Anal.},
  FJOURNAL = {Nonlinear Analysis. Theory, Methods \& Applications. An
              International Multidisciplinary Journal},
    VOLUME = {255},
      YEAR = {2025},
     PAGES = {Paper No. 113776, 16},
      ISSN = {0362-546X,1873-5215},
   MRCLASS = {52A39 (35B35 35J60 35J96)},
  MRNUMBER = {4867241},
       DOI = {10.1016/j.na.2025.113776},
       URL = {https://doi.org/10.1016/j.na.2025.113776},
}

@article{Brandolini2009NewIE,
  title={New isoperimetric estimates for solutions to Monge―Amp{\`e}re equations},
  author={B. Brandolini and C. Nitsch and C. Trombetti},
  journal={Annales De L Institut Henri Poincare-analyse Non Lineaire},
  year={2009},
  volume={26},
  pages={1265-1275}
}

@article {tso,
    AUTHOR = {Tso, K.},
     TITLE = {On symmetrization and {H}essian equations},
   JOURNAL = {J. Analyse Math.},
  FJOURNAL = {Journal d'Analyse Math\'{e}matique},
    VOLUME = {52},
      YEAR = {1989},
     PAGES = {94--106},
      ISSN = {0021-7670,1565-8538},
   MRCLASS = {52A22 (52A40 58G20)},
  MRNUMBER = {981497},
MRREVIEWER = {Eric\ Grinberg},
       DOI = {10.1007/BF02820473},
       URL = {https://doi.org/10.1007/BF02820473},
}

@book{Schneider,
    author = { Schneider, R. },
    title = { Convex bodies : the Brunn-Minkowski theory },
    publisher = { Cambridge University Press Cambridge ; New York },
    pages = { xiii, 490 p. : },
    year = { 1993 },
    type = { Book },
    url = { http://www.loc.gov/catdir/toc/cam027/92011481.html http://www.loc.gov/catdir/samples/cam034/92011481.html },
    language = { English },
    subjects = { Convex bodies.; Analytic geometry }
}

@book {HP,
    AUTHOR = {Henrot, A. and Pierre, M.},
     TITLE = {Shape variation and optimization},
    SERIES = {EMS Tracts in Mathematics},
    VOLUME = {28},
      NOTE = {A geometrical analysis,
              English version of the French publication [MR2512810] with
              additions and updates},
 PUBLISHER = {European Mathematical Society (EMS), Z\"urich},
      YEAR = {2018},
     PAGES = {xi+365},
      ISBN = {978-3-03719-178-1},
   MRCLASS = {49Q10 (31B15 35J20 35R35 49-02 58E25 65J05)},
  MRNUMBER = {3791463},
MRREVIEWER = {Jan\ Soko\l owski},
       DOI = {10.4171/178},
       URL = {https://doi.org/10.4171/178},
}

@article {Talenti1981,
    AUTHOR = {Talenti, G.},
     TITLE = {Some estimates of solutions to {M}onge-{A}mp\`ere type
              equations in dimension two},
   JOURNAL = {Ann. Scuola Norm. Sup. Pisa Cl. Sci. (4)},
  FJOURNAL = {Annali della Scuola Normale Superiore di Pisa. Classe di
              Scienze. Serie IV},
    VOLUME = {8},
      YEAR = {1981},
    NUMBER = {2},
     PAGES = {183--230},
      ISSN = {0391-173X,2036-2145},
   MRCLASS = {53C45 (35J60)},
  MRNUMBER = {623935},
MRREVIEWER = {V.\ T.\ Fomenko},
       URL = {http://www.numdam.org/item?id=ASNSP_1981_4_8_2_183_0},
}

@article {BNT2,
    AUTHOR = {Brandolini, B. and Nitsch, C. and Trombetti, C.},
     TITLE = {Shape optimization for {M}onge-{A}mp\`ere equations via domain
              derivative},
   JOURNAL = {Discrete Contin. Dyn. Syst. Ser. S},
  FJOURNAL = {Discrete and Continuous Dynamical Systems. Series S},
    VOLUME = {4},
      YEAR = {2011},
    NUMBER = {4},
     PAGES = {825--831},
      ISSN = {1937-1632,1937-1179},
   MRCLASS = {49Q10 (35J96 52A41)},
  MRNUMBER = {2746444},
MRREVIEWER = {Luca\ Granieri},
       DOI = {10.3934/dcdss.2011.4.825},
       URL = {https://doi.org/10.3934/dcdss.2011.4.825},
}

@article {Wang,
    AUTHOR = {Wang, X. J.},
     TITLE = {A class of fully nonlinear elliptic equations and related
              functionals},
   JOURNAL = {Indiana Univ. Math. J.},
  FJOURNAL = {Indiana University Mathematics Journal},
    VOLUME = {43},
      YEAR = {1994},
    NUMBER = {1},
     PAGES = {25--54},
      ISSN = {0022-2518,1943-5258},
   MRCLASS = {35J65},
  MRNUMBER = {1275451},
MRREVIEWER = {John\ Urbas},
       DOI = {10.1512/iumj.1994.43.43002},
       URL = {https://doi.org/10.1512/iumj.1994.43.43002},
}

@article {Lions,
    AUTHOR = {Lions, P.-L.},
     TITLE = {Two remarks on {M}onge-{A}mp\`ere equations},
   JOURNAL = {Ann. Mat. Pura Appl. (4)},
  FJOURNAL = {Annali di Matematica Pura ed Applicata. Serie Quarta},
    VOLUME = {142},
      YEAR = {1985},
     PAGES = {263--275},
      ISSN = {0003-4622},
   MRCLASS = {58G25 (35B32 35J50 35J60 49G05)},
  MRNUMBER = {839040},
MRREVIEWER = {R\'emi\ Vaillancourt},
       DOI = {10.1007/BF01766596},
       URL = {https://doi.org/10.1007/BF01766596},
}

@book {HCG,
     TITLE = {Handbook of convex geometry. {V}ol. {A}, {B}},
    EDITOR = {Gruber, P. M. and Wills, J. M.},
 PUBLISHER = {North-Holland Publishing Co., Amsterdam},
      YEAR = {1993},
     PAGES = {Vol. A: lxvi+735 pp.; Vol. B: pp. i--lxvi and 737--1438},
      ISBN = {0-444-89598-1},
   MRCLASS = {52-06 (52-00)},
  MRNUMBER = {1242973},
}

@article {BT,
    AUTHOR = {Brandolini, B. and Trombetti, C.},
     TITLE = {Comparison results for {H}essian equations via symmetrization},
   JOURNAL = {J. Eur. Math. Soc. (JEMS)},
  FJOURNAL = {Journal of the European Mathematical Society (JEMS)},
    VOLUME = {9},
      YEAR = {2007},
    NUMBER = {3},
     PAGES = {561--575},
      ISSN = {1435-9855,1435-9863},
   MRCLASS = {35J60 (35B05)},
  MRNUMBER = {2314107},
MRREVIEWER = {Fabiana\ Leoni},
       DOI = {10.4171/JEMS/88},
       URL = {https://doi.org/10.4171/JEMS/88},
}

@article {DPGC,
    AUTHOR = {Della Pietra, F. and Gavitone, N. and Xia, C.},
     TITLE = {Symmetrization with respect to mixed volumes},
   JOURNAL = {Adv. Math.},
  FJOURNAL = {Advances in Mathematics},
    VOLUME = {388},
      YEAR = {2021},
     PAGES = {Paper No. 107887, 31},
      ISSN = {0001-8708,1090-2082},
   MRCLASS = {35A23 (35J25 52A39)},
  MRNUMBER = {4288211},
       DOI = {10.1016/j.aim.2021.107887},
       URL = {https://doi.org/10.1016/j.aim.2021.107887},
}

@article {L,
    AUTHOR = {Le, N. Q.},
     TITLE = {The eigenvalue problem for the {M}onge-{A}mp\`ere operator on
              general bounded convex domains},
   JOURNAL = {Ann. Sc. Norm. Super. Pisa Cl. Sci. (5)},
  FJOURNAL = {Annali della Scuola Normale Superiore di Pisa. Classe di
              Scienze. Serie V},
    VOLUME = {18},
      YEAR = {2018},
    NUMBER = {4},
     PAGES = {1519--1559},
      ISSN = {0391-173X,2036-2145},
   MRCLASS = {47A75 (35J70 35J96)},
  MRNUMBER = {3829755},
MRREVIEWER = {Mauricio\ Alexander\ Rivas},
}

@article {reilly73,
    AUTHOR = {Reilly, R. C. },
     TITLE = {On the {H}essian of a function and the curvatures of its graph},
   JOURNAL = {Michigan Math. J.},
    VOLUME = {20},
      YEAR = {1973},
     PAGES = {373--383},
}

@article {trudinger97,
    AUTHOR = {Trudinger, N. S. },
     TITLE = {On  new isoperimetric inequalities and symmetrization},
   JOURNAL = {J. Reine Angew. Math.},
    VOLUME = {488},
      YEAR = {1997},
     PAGES = {203--220},
}

@article {caffarelli,
    AUTHOR = {Caffarelli, L. and Nirenberg, L. and Spruck, J.},
     TITLE = {The {D}irichlet problem for nonlinear second-order elliptic
              equations. {III}. {F}unctions of the eigenvalues of the
              {H}essian},
   JOURNAL = {Acta Math.},
  FJOURNAL = {Acta Mathematica},
    VOLUME = {155},
      YEAR = {1985},
    NUMBER = {3-4},
     PAGES = {261--301},
      ISSN = {0001-5962,1871-2509},
   MRCLASS = {35J65 (53C40 58G30)},
  MRNUMBER = {806416},
MRREVIEWER = {Philippe\ Delano\"e},
       DOI = {10.1007/BF02392544},
       URL = {https://doi.org/10.1007/BF02392544},
}

\Addresses
\end{document}